\numberwithin{equation}{section}
\newtheorem{theorem}[equation]{Theorem}
\newtheorem{proposition}[equation]{Proposition}
\newtheorem{lemma}[equation]{Lemma}
\newtheorem{corollary}[equation]{Corollary}
\theoremstyle{definition}
\newtheorem{remark}[equation]{Remark}
\newtheorem{eg}[equation]{Example}
\newenvironment{example}[1][]{\begin{eg}[#1] \pushQED{\qed}}{\popQED \end{eg}}
\newtheorem{defn}[equation]{Definition}
\newenvironment{definition}[1][]{\begin{defn}[#1]\pushQED{\qed}}{\popQED \end{defn}}
\newtheorem{ques}[equation]{Question}
\newcommand{\ZZ}{\mathbb{Z}}
\newcommand{\NN}{\mathbb{N}}
\renewcommand{\phi}{\varphi}
\newcommand{\comment}[1]{}
\def\Ddots{\mathinner{\mkern1mu\raise\p@
\vbox{\kern7\p@\hbox{.}}\mkern2mu
\raise4\p@\hbox{.}\mkern2mu\raise7\p@\hbox{.}\mkern1mu}}
\newcommand{\taucar}[0]{\widehat{\tau}}
\newcommand{\taubar}[0]{\overline{\tau}}
\newcommand{\fhat}[0]{\widehat{f}}
\newcommand{\ghat}[0]{\widehat{g}}
\newcommand{\sigstar}[0]{\sigma^{*}}
\newcommand{\siginv}[0]{\sigma^{-1}}
\newcommand{\taustar}[0]{\tau^{*}}
\newcommand{\cyclist}[2]{[#1]_{#2}}
\newcommand{\cyclinv}[2]{[#1]^{-1}_{#2}}
\newcommand{\evenlist}[1]{[#1]}
\newcommand{\enelinv}[1]{[#1]^{-1}}
\newcommand{\tand}{\text{ and }}
\newcommand{\tor}{\text{ or }}
\newcommand{\xycycle}{(x \; y)}
\newcommand{\sgroupminus}[2]{S_{#1}\setminus S_{#2}}
\newcommand{\sumSn}[1]{C(#1)}
\newcommand{\finv}[0]{f^{-1}}
\title{Variations on Keeler's Theorem}
\author{A. Bartas, D. Lara, B. Leavitt, B. Vessely}
\begin{document}
\maketitle
\begin{abstract}
The 2010 \emph{Futurama} episode \textit{The Prisoner of Benda} features a mind swapping machine that swaps the minds of two people at a time with the restriction that the same pair of people cannot use the machine more than once. We show that if a machine swaps $n$ people cyclically with the condition that the same group of people cannot use the machine again, we can find a way to get everyone back. We prove our solution is optimal for when $n =3$. We also introduce an infinite variant of the mind swapping machine.  
\end{abstract}

\section{Introduction}
\subsection{Context and Motivation}
In \emph{Futurama's} acclaimed episode, \textit{The Prisoner of Benda}, a machine swaps the minds of two people at once. The episodes features a restriction in which a pair can only use the machine to swap once. A single person, however, can use the machine multiple times with different partners. After a mind switching fiasco, an algorithm is found in the show that gets everyone back into their original body without knowing the sequence of switches that occurred. The solution was written up by one of the show's writers, mathematician Ken Keeler. This algorithm is often referred to as Keeler's Theorem.

Keeler reframed the problem in terms of permutations in the following way. Each brain swap can be described as a transposition in $S_n$. The mind switching fiasco is then a product of transpositions that yields a permutation that is contained in $S_n$. The problem to get everyone back into their original bodies reduces to finding a way to write the inverse of a permutation $\sigma \in S_n$ as a product of distinct transpositions. Keeler's solution uses two external individuals that have not used the machine prior to the mind switching frenzy. It is important to emphasize that the inverse of any permutation in $S_n$ exists; the significance of Keeler's result is that the inverse can be written as a product of distinct transpositions. For example, suppose that Person $1$ and Person $2$ use the machine. If there were no restriction, the inverse of $(1\;2)$ is itself. That is, Person $1$ and Person $2$ would use the machine again to get themselves back into their original bodies. With the constraint, the inverse of the permutation can be written as a product of distinct transpositions as
\begin{equation}
    (1\;2)^{-1} = (x\;y)(2\;x)(1\;y)(2\;y)(1\;x).
\end{equation} 

In 2016, Elder \cite{elder_paper}, \cite{elder_thesis} considered a machine that permutes the minds of $k$ people cyclically. That is, if people $1, 2, \cdots, k$ use the machine, then their minds get permuted as follows: 
$$1 \to 2 \to \cdots \to k  \to 1 .$$
This can be expressed as the $k$-cycle $(1\;2\;3\cdots\;k)$. The restriction that Elder introduced is that once a cycle $\tau$ has occurred, no cycle in the cyclic subgroup $\langle \tau \rangle$ can occur. Elder showed that there is a way to return everyone's mind back to their own body when $k$ is either even or prime. In fact, Elder's proof works for all natural numbers $k \geq 2$.

Evans, Huang, and Nguyen \cite{optimal_paper} explored Keeler's Theorem from a different perspective. They refined Keeler's result by providing a more efficient algorithm that uses the smallest possible number of switches, with the restriction that at least one external must be used in each step. In addition, they also proved that the identity permutation in $S_n$ can can be written as a product of $m$ distinct transpositions for when $m$ is even and when $6 \leq m \leq \binom{n}{2}$.

Our goals are to build upon these results. In our first variation of the problem, the machine permutes $k$ people cyclically, as in Elder's variation. Our work varies from Elder's in that once a set of $k$ people have used the machine, that same set cannot use the machine again. However, each individual can use the machine multiple times. Given any permutation that is obtained from permuting $n$ people cyclically, we provide an algorithm that gets everyone back into their original body. To build on the work of Evans, Huang, and Nguyen, we explore the optimally of our solution for when the machine permutes $3$ people. Finally, we introduce an infinite variation of the machine that swaps the minds of an infinite amount of people. Our main theorems are as follows.

\subsection{Main Results}
We employ a variation similar to that of Elder \cite{elder_paper}. A machine swaps $k$ people cyclically. That is if $a_1, a_2, \cdots, a_k$ use the machine, then the machine permutes their minds as 
$$ a_1 \to a_2  \to \cdots \to a_k \to a_1.$$
 Our restriction is that no set of $k$ people can use the machine more than once. Additionally, we follow Keeler and Elder's assumption in that we don't know which steps were taken to achieve the original mind scrambling, meaning we need to introduce external users when try to revert everyone back into their original bodies. We will refer to these external people as \textit{outsiders} throughout the paper. Two of our main theorems under these assumptions are as follows: 
\begin{theorem}\label{main-n-theorem}
    Let $m,n\in\NN$, $m\ge3$, $n\ge m$. Let $H_m \subseteq S_n$ denote the subgroup generated by all $m$-cycles. If $\sigma\in H_m$, then there exist $\mu_1, \cdots, \mu_r$ such that
    \begin{enumerate}
        \item 
            $\mu_i \in S_{n+d}\setminus S_n$, where
            $d = 
            \begin{cases}
            (m-2) & \text{if } m \text{ is odd}\\
            3(\frac{m}{2}-1) & \text{if } m \text{ is even}
            \end{cases} $
        
        \item $\sigma^{-1} = \mu_1 \cdots \mu_r$, where $\mu_i$ is an $m$-cycle permuting a distinct set of $m$ elements for all $1\leq i \leq r$.
    \end{enumerate}
\end{theorem}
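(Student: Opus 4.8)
The plan is to follow the architecture of Keeler's original argument: reduce to cycles, ``unwind'' each cycle through the outsiders, and restore the outsiders at the end, keeping all the $m$-cycles produced supported on pairwise distinct $m$-element sets. First the reductions. Since an $m$-cycle is an even permutation precisely when $m$ is odd, and since for $n\ge m\ge 3$ the $m$-cycles of $S_n$ generate $A_n$ when $m$ is odd and all of $S_n$ when $m$ is even (for $m$ odd one uses that a suitably overlapping product of two $m$-cycles is a $3$-cycle together with the fact that $3$-cycles generate $A_n$; these generation statements are standard, and the finitely many small $n$ cause no trouble), we have $H_m=A_n$ for $m$ odd and $H_m=S_n$ for $m$ even. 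As $H_m$ is a subgroup, $\sigma^{-1}\in H_m$, so it suffices to prove: every $\tau\in H_m$ equals a product of $m$-cycles, each supported on a distinct $m$-subset of $\{1,\dots,n+d\}$ meeting $\{n+1,\dots,n+d\}$. Writing $\tau$ in disjoint cycle form $\tau=\gamma_1\cdots\gamma_p$, it is enough to realize each $\gamma_j$ by a ``gadget'' whose $m$-cycles are supported inside $\{n+1,\dots,n+d\}\cup\supp(\gamma_j)$, and then to check the gadgets do not clash.

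\emph{The single-cycle gadget.} Fix a cycle $\gamma=(b_1\,b_2\,\cdots\,b_k)$ on points of $\{1,\dots,n\}$ and a pool $x_1,\dots,x_d$ of outsiders. I would build a word in $m$-cycles that (i) equals $\gamma$ in $S_{n+d}$, (ii) is a product of $m$-cycles each supported on a distinct $m$-subset of $\{x_1,\dots,x_d\}\cup\{b_1,\dots,b_k\}$ meeting $\{x_1,\dots,x_d\}$, and (iii) returns every $x_i$ to its own place. The construction uses the outsiders as a rotating buffer: push a block of points of $\gamma$ into the buffer with one $m$-cycle, cyclically shift, and flush, iterating down the length of $\gamma$; it is cleanest to first handle the case where $\gamma$ is a $3$-cycle (when $m$ is odd) or a transposition (when $m$ is even) and then reduce general cycles to these. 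For $m=3$ the whole thing collapses to the identity $(b\,c\,d)=(x_1\,d\,b)(x_1\,b\,c)$; for general odd $m$ one simulates a $3$-cycle with $m$-cycles through the $m-2$ outsiders, while for even $m$ an $m$-cycle is an odd permutation, so simulating a transposition à la Keeler is ``parity-expensive'' and is exactly what forces $d=3(\tfrac m2-1)$. Within a single gadget the $m$-sets are kept distinct by construction.

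\emph{Gluing the gadgets.} Because the $\gamma_j$ have pairwise disjoint supports, the $m$-sets used by the gadgets for $\gamma_j$ and $\gamma_{j'}$ can coincide only if both sit entirely inside the outsider pool --- impossible when $d<m$ (which holds for every odd $m$ and for every even $m<6$), and in the remaining even cases it is handled by a fixed convention or by merging the outsider-restoring phases of all the gadgets into a single clash-free cleanup at the end. Every $m$-set produced meets $\{n+1,\dots,n+d\}$ and so automatically lies in $S_{n+d}\setminus S_n$ and avoids any $m$-set inside $\{1,\dots,n\}$; hence the product of all the gadgets' $m$-cycles is the desired expression of $\tau$, and then of $\sigma^{-1}$.

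\emph{Where the difficulty lies.} The technical heart is the single-cycle gadget: producing, with exactly $d=m-2$ outsiders for odd $m$ and $d=3(\tfrac m2-1)$ for even $m$, an explicit word in $m$-cycles that evaluates to the prescribed cycle, leaves every outsider where it started, and is internally supported on distinct $m$-sets --- and, in particular, seeing that these outsider counts are the ones the construction genuinely needs rather than artifacts of a wasteful encoding. The parity split is forced at the root: $H_m$ is $A_n$ versus $S_n$, and the elementary permutations one ends up simulating ($3$-cycles versus transpositions) behave differently under $m$-cycle simulation, the even case being the worse one. The secondary obstacle is the global distinctness in the gluing step, and a minor one is checking the small values of $n$ and the generation facts used at the outset.
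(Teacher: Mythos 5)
Your outline matches the architecture of the paper's proof --- identify $H_m$ as $A_n$ (odd $m$) or $S_n$ (even $m$), reduce to $3$-cycles in the odd case and transpositions in the even case, simulate these elementary pieces by $m$-cycles threaded through the outsiders, and check that all supports are distinct --- but it has a genuine gap: the simulating words themselves, which you correctly identify as ``the technical heart,'' are never produced, and they are essentially the entire content of the theorem. The paper's gadgets are short and explicit: for any $m\ge 3$,
\[(a_1\; a_2\; a_3)^{-1} = (a_1\; a_3\; x_1\cdots x_{m-2})(a_3\; a_2\; x_{m-2}\cdots x_1),\]
two $m$-cycles whose supports differ because the pairs $\{a_1,a_3\}$ and $\{a_2,a_3\}$ differ; and for even $m$, splitting the $3(\tfrac m2-1)$ outsiders into three blocks $w,y,z$ of size $\tfrac m2-1$,
\[(a_1\; a_2)^{-1}=(a_1\; z^{-1}\; y^{-1}\; a_2)(a_1\; w^{-1}\; a_2\; z)(w\; a_1\; y\; a_2),\]
three $m$-cycles each omitting a different block. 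Both products fix every outsider, which is what lets the gadgets be concatenated. A rough description of ``push a block into the buffer, shift, and flush'' does not substitute for exhibiting such words and verifying that they evaluate correctly, restore the outsiders, and use distinct $m$-sets.

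There is also a concrete error in your gluing step for odd $m$. You propose to realize each disjoint cycle $\gamma_j$ of $\tau$ individually by a product of $m$-cycles equal to $\gamma_j$. When $m$ is odd, every $m$-cycle is an even permutation, so no product of $m$-cycles can equal an even-length (hence odd) cycle $\gamma_j$; the per-cycle gadget you require in condition (i) does not exist. Since $\sigma\in A_n$, the even-length cycles come in pairs, and the paper repairs this by peeling a transposition off each member of such a pair and rewriting the product of the two leftover transpositions as a product of two $3$-cycles, which are then fed into the $3$-cycle gadget. Your plan as stated skips this pairing and would stall on, e.g., $\sigma=(a_1\,a_2)(a_3\,a_4)$ with $m=3$.
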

That is, if a collection of $n$ people have used a mind swap machine of size $m \geq 3$, we can revert everyone back with the addition of $m - 2$ outsiders if $m$ is odd and $3 \left( m / 2 - 1 ) \right)$ outsiders if $m$ is even. 
\begin{theorem}\label{optimal}
    Let $\sigma \in S_n$ be written as a product of $r$ disjoint cycles as
    $\sigma = \tau_1 \cdots \tau_r$, where $\tau_i$ is a $k_i$-cycle with $k_i \geq 2$ and 
    $k_1 + \cdots + k_r = n$. Consider outsider $x$ which $S_{n+1}$ acts upon. We can write $\sigma^{-1} \in S_{n+1}\setminus S_n$ as a product of ${(n+r)\over 2}$ 3-cycles, each of which contains $x$. Further, this solution is optimal in that any other construction of $\siginv$ must have at least ${(n+r)\over 2}$ steps. 
\end{theorem}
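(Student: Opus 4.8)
The plan is to convert the problem into a shortest-word computation in $S_{n+1}$ and read the answer off the cycle type of $\sigma$. Regard $\sigma$, hence $\sigma^{-1}$, as an element of $S_{n+1}$ fixing the outsider $x$; since a product of $3$-cycles is even, $\sigma^{-1}$ is even, which is exactly the condition that makes $(n+r)/2$ an integer and which the statement implicitly assumes. Call a transposition a \emph{star transposition} if it contains $x$. The identity $(x\;a\;b)=(x\;b)(x\;a)$ turns a product of $t$ $3$-cycles through $x$ into a product of $2t$ star transpositions; conversely a reduced star-transposition word (no two adjacent letters equal) of length $m$ — necessarily even, as $\sigma^{-1}$ is even — becomes, on grouping consecutive letters in pairs, a product of $m/2$ genuine $3$-cycles through $x$. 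Since an optimal star-transposition word for $\sigma^{-1}$ is reduced, it follows that if $\ell$ is the least number of star transpositions with product $\sigma^{-1}$ then the least number of $3$-cycles through $x$ with product $\sigma^{-1}$ equals $\ell/2$. It remains to prove $\ell=n+r$ and to make the resulting $(n+r)/2$ $3$-cycles pairwise \emph{distinct}.

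For the lower bound $\ell\ge n+r$ (which already yields optimality, since distinctness can only lengthen a word), define $\Phi(\pi)=m(\pi)+c(\pi)$ on $S_{n+1}$, where $m(\pi)$ is the number of points other than $x$ moved by $\pi$ and $c(\pi)$ is the number of cycles of $\pi$ of length $\ge2$ that avoid $x$. A case analysis — splitting on whether $x$ and $i$ lie in the same cycle of $\pi$ and on whether multiplying by $(x\;i)$ creates or destroys a fixed point — shows $|\Phi(\pi(x\;i))-\Phi(\pi)|\le1$ for every star transposition $(x\;i)$; because $\Phi(\pi)=\Phi(\pi^{-1})$ the same bound holds for left multiplication. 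As $\Phi(\mathrm{id})=0$ and $\Phi(\sigma^{-1})=n+r$ (all $n$ non-$x$ points move, in $r$ cycles none of which meets $x$), every star-transposition word for $\sigma^{-1}$ has length at least $n+r$, hence every product of $3$-cycles through $x$ equal to $\sigma^{-1}$ uses at least $(n+r)/2$ of them.

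For the matching construction, the conjugation identity
\[
(a_1\;a_2\;\cdots\;a_k)^{-1}=(x\;a_1)(x\;a_2)(x\;a_3)\cdots(x\;a_k)(x\;a_1)
\]
writes each $k$-cycle's inverse as a reduced star-transposition word of length $k+1$. The disjoint cycles $\tau_i$ commute, so I may list them in any order; applying the identity to each $\tau_i^{-1}$ and concatenating gives a reduced word for $\sigma^{-1}$ of length $\sum_i(k_i+1)=n+r$, so $\ell=n+r$. Pairing consecutive letters then produces $(n+r)/2$ $3$-cycles through $x$, each genuine since the word is reduced. Two of these coming from different blocks share no symbol other than $x$; within one block the non-$x$ pairs $\{a_1,a_2\},\{a_3,a_4\},\dots,\{a_1,a_k\}$ are distinct (here $k\ge3$ is used), so all the $3$-cycles are distinct. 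The only delicacy is that an even-length $\tau_i$ gives a block of odd length $k_i+1$, so the pairing would straddle the join with the next block; but $\sigma$ even forces the number of even-length $\tau_i$ to be even, and listing those in adjacent pairs confines each straddle to a self-contained double block, to which the same argument applies.

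I expect the main obstacle to be verifying $|\Phi(\pi(x\;i))-\Phi(\pi)|\le1$: $\Phi$ is not the reflection length (a non-$x$ cycle of length $s\ge2$ contributes $s+1$ to $\Phi$ while a fixed point contributes $0$, a discontinuity at $s=1$), so although each case is immediate the analysis must be carried out completely. The secondary nuisance is the distinctness bookkeeping in the construction, especially across the joins between blocks from different $\tau_i$; pairing the even-length cycles together before forming the star word is what keeps this manageable.
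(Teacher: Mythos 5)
Your proof is correct, and its optimality half goes by a genuinely different route than the paper's. On the construction side, your identity $(a_1\,\cdots\,a_k)^{-1}=(x\,a_1)(x\,a_2)\cdots(x\,a_k)(x\,a_1)$ is, after pairing consecutive letters, essentially a cyclic rotation of the paper's explicit products $F(\tau_u)$ and $G(\tau_v,\tau_w)$; the paper likewise handles even-length cycles by pairing them up (its bridging factor $(c_{k_2}\;b_{k_1}\;x)$ plays the role of your straddling pair), and your version has the merit of actually checking that the $(n+r)/2$ supports are pairwise distinct, a point the paper dismisses with ``it can be verified.'' The real divergence is the lower bound. The paper counts appearances of elements of $\phi$ in the $3$-cycles (its quantity $C(\sigma^{-1})$ is exactly your star-word length $\ell$) and proves $C(\sigma^{-1})\ge n+r$ by a mind-tracking argument: for each $\tau_i$ it examines the chronologically first $3$-cycle meeting the support of $\tau_i$ and argues by cases that some element of that support receives a foreign mind and must therefore recur. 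You instead expand each $3$-cycle via $(x\,a\,b)=(x\,b)(x\,a)$ and bound the star-transposition length below by the potential $\Phi(\pi)=m(\pi)+c(\pi)$, which changes by exactly $\pm1$ under multiplication by a star transposition; I carried out the four cases (split versus merge of the $x$-cycle, crossed with whether $i$ becomes or ceases to be a fixed point) and they all check out, so the step you flag as the main obstacle is indeed only routine. Your argument is the standard Cayley-graph distance computation for the star-transposition generating set and is arguably cleaner and less delicate than the paper's chronological case analysis; what the paper's argument buys in exchange is that it transfers verbatim to the setting of $d\ge 2$ outsiders (its Lemma \ref{3_C_count} and Corollary \ref{3_optimal_people}), whereas your $\Phi$ is tailored to a single distinguished point $x$. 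Both routes establish $\ell\ge n+r$ and hence the same optimality bound $(n+r)/2$.
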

If $n$ people get scrambled using a machine of size $m = 3$, we can group them up into a $r$ number of disjoint groups where in each group, the individuals have been permutated cyclically. We prove that we need one and only one outsider and show that the minimum number of steps required must be by $\left(  \frac{n + r}{2}\right)$. We later consider a variation of the problem where the machine "cycles" a countably infinite amount of people. The main result shown in this variation is, 

\begin{theorem}
    Let $X$ be a countable infinite set, $\sigma \in S_{fin}(X)$. Then there exist $f_1$ and $f_2$, which are forgetful and retentive respectively, such that $dom(f_1)\neq dom(f_2)$ and $\siginv = f_2 \circ f_1$.
\end{theorem}
This was quite surprising as $\sigma$ could have been created using an arbitrarily large number of swaps and yet its inverse can always be written in 2 steps.

\subsection{Symmetric Group} In this section, we will present fundamental results about the symmetric group $S_n$. For a more detailed recollection, we refer the interested reader to standard references such as Dummit and Foote \cite{dummit_foote_text}. 

We denote the group of permutations of $n$ objects by $S_n$. A \emph{cycle} of length $k$ is a permutation that permutes $k$ objects cyclically. 
$S_n$ is defined to be the set of bijections on the set $\{1,2, \cdots, n\}$, and it forms a group under function composition. The objects of $S_n$ are called \emph{permutations}. For an example, consider the permutation $\sigma \in S_4$ defined as 
$$\sigma = \begin{pmatrix} 
1 & 2 & 3 & 4 \\
4 & 3 & 1 & 2
\end{pmatrix}$$
where $\sigma$ sends $1$ to $4$, $2$ to $3$, and so on. A convenient alternative notation for permutations is called cycle notation. We see that $\sigma$ permutes the objects as 
$$1 \to 4 \to 2 \to 3 \to 1. $$
We can then write $\sigma$ in cycle notation as $\sigma = (1\;4\;2\;3)$. Note that if $\sigma= (1\;4\;2\;3) \in S_{10}$, it still permutes $1 \to 4 \to 2 \to 3 \to 1 $ cyclically, but all other numbers stay fixed. 
A cycle $\tau \in S_n$ of length $k \leq n$ is a function that permutes $k$ objects cyclically. Two cycles are \emph{disjoint} if the numbers moved by one cycle are left fixed by the other. For example, $(1\;3) (2\;5)$ is a product of two disjoint cycles. 
\begin{example}
Let's compute the product 
\begin{equation}
 (1\;5)(1\;2\;3)(1\;4).
 \end{equation}
When multiplying permutations, the permutation on the right is evaluated first. The right most permutation sends $1 \to 4$. The second leaves $4$ fixed, and so does the third. Thus, the permutation sends $1 \to 4$. Next, we evaluate the permutation at $4$. The right most cycle sends $4 \to 1$, the second sends $1 \to 2$, and the third leaves $2$ fixed. Continuing this process, we obtain 
\begin{equation}
    (1\;5)(1\;2\;3)(1\;4) = (1\;4\;2\;3\;5).
\end{equation} 
\end{example}
The example illustrates that when we take a product of non-disjoint cycles, we obtain a product of disjoint cycles (a product of one cycle, in this example). This is true for all permutations. 

\begin{theorem}\label{perm_disjoint_cycles}
    Every permutation of $S_n$ can be written uniquely, up to order, as a product of disjoint cycles.
\end{theorem}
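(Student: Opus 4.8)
The plan is to prove existence and uniqueness separately, in both cases exploiting the orbits of the cyclic subgroup $\langle\sigma\rangle$ acting on $\{1,2,\dots,n\}$.

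\emph{Existence.} First I would introduce the relation on $\{1,\dots,n\}$ declaring $a\sim b$ whenever $b=\sigma^k(a)$ for some $k\in\ZZ$. Since $\sigma$ is a bijection, this is easily checked to be an equivalence relation, so it partitions $\{1,\dots,n\}$ into orbits $O_1,\dots,O_r$. Fix a representative $a_i\in O_i$ and set $k_i=|O_i|$; because the orbit is finite, $k_i$ is the least positive integer with $\sigma^{k_i}(a_i)=a_i$, and $O_i=\{a_i,\sigma(a_i),\dots,\sigma^{k_i-1}(a_i)\}$ with these elements distinct. Define $\tau_i$ to be the cycle $(a_i\;\sigma(a_i)\;\cdots\;\sigma^{k_i-1}(a_i))$. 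Distinct orbits are disjoint, so the $\tau_i$ move pairwise disjoint sets and therefore commute. To see $\sigma=\tau_1\cdots\tau_r$, evaluate both sides at an arbitrary $a$: it lies in exactly one orbit $O_i$, every $\tau_j$ with $j\neq i$ fixes $a$, and $\tau_i(a)=\sigma(a)$ by construction (using $\sigma^{k_i}(a_i)=a_i$ for the wrap-around at the end of the orbit). Cycles of length $1$ correspond to fixed points and may be kept or discarded by convention.

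\emph{Uniqueness.} Suppose $\sigma=\rho_1\cdots\rho_s$ is any expression as a product of pairwise disjoint cycles (of length $\ge 2$). For $a$ in the support of some $\rho_i$, every other $\rho_j$ fixes both $a$ and $\rho_i(a)$, so using that disjoint cycles commute one gets $\sigma(a)=\rho_i(a)$ and, inductively, $\sigma^m(a)=\rho_i^m(a)$ for all $m\ge 0$. Hence the support of $\rho_i$ is exactly the $\langle\sigma\rangle$-orbit of $a$, and $\rho_i$ coincides on that orbit with the cycle built from it in the existence step. Thus every cycle appearing in any such factorization is one of the orbit cycles $\tau_1,\dots,\tau_r$, and conversely every orbit of size $\ge 2$ must be accounted for; so the two factorizations agree up to reordering.

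\emph{Main obstacle.} There is no real difficulty here — the argument is bookkeeping. The one spot that deserves care is the verification $\sigma=\tau_1\cdots\tau_r$, where one must track how a product of disjoint cycles acts element by element (in particular the ``last sends back to first'' step inside each cycle) and fix a convention about whether fixed points are recorded as $1$-cycles; everything else is immediate from elementary properties of bijections of a finite set.
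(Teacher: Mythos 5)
Your argument is correct and complete: the orbit decomposition under $\langle\sigma\rangle$ for existence, and the observation that any disjoint-cycle factorization must have cycle supports equal to those orbits for uniqueness, is the standard textbook proof. The paper itself states this theorem as background without proof, deferring to references such as Dummit and Foote, so there is no in-paper argument to compare against; your write-up matches what those references do, and the one point you flag as needing care (the element-by-element verification that $\sigma=\tau_1\cdots\tau_r$, including the wrap-around and the convention on fixed points) is indeed the only place where bookkeeping matters.
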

A \emph{transposition} is a cycle of length $2$, such as $(1\;2)$. Every permutation can be written as a product of transpositions.  A permutation is said to be \emph{even} if it can be written as a product of of an even number of transpositions and \emph{odd} if it can be written as a product of an odd number of transpositions. All permutations are either even or odd but not both.
\begin{example}
    $$ (123) = (13)(12) $$
so the cycle $(123)$ is even. 
$$ (1234) = (14)(13)(12)$$
so the cycle $(1234)$ is odd. Let $I$ be the identity permutation. Observe 
$$I = (12)(12)$$
so $I$ is even.
\end{example}
From these examples we can observe that a cycle of odd length is an even permutation and a cycle of even length is an odd permutation. The collection of all even permutation of $S_n$ forms a subgroup called the alternating group $A_n.$ We present  several key results about the alternating group. 
\begin{theorem}\label{A_n_index_generation_lemma}
    $A_n$ is a subgroup of index $2$ and for $n\ge3$, $A_n$ is generated by $3$-cycles. 
\end{theorem}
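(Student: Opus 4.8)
The plan is to prove the two assertions separately. For the claim that $[S_n : A_n] = 2$, I would introduce the sign map $\varepsilon \colon S_n \to \{\pm 1\}$ sending even permutations to $+1$ and odd permutations to $-1$. The excerpt already records that every permutation is either even or odd but not both, which is exactly what guarantees $\varepsilon$ is a well-defined function; it is a homomorphism because concatenating a product of $a$ transpositions with a product of $b$ transpositions is a product of $a+b$ transpositions, so parities add. By definition $A_n = \ker\varepsilon$, and since the transposition $(1\;2)$ is odd, the map $\varepsilon$ is surjective. The first isomorphism theorem then yields $S_n/A_n \cong \{\pm 1\}$, hence $[S_n : A_n] = 2$.

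For the generation statement, first observe that every $3$-cycle lies in $A_n$: as in the example above, $(a\;b\;c) = (a\;c)(a\;b)$ is a product of two transpositions, so it is even. For the reverse containment, let $\sigma \in A_n$. Since $\sigma$ is a product of transpositions and $\sigma$ is even, that product has an even number of factors, say $\sigma = t_1 t_2 \cdots t_{2\ell}$. Grouping the factors into consecutive pairs, it is enough to check that a product of two transpositions is a product of $3$-cycles, and there are only three cases for transpositions $(a\;b)$ and $(c\;d)$ according to whether they share $0$, $1$, or $2$ of their letters: if they are equal the product is the identity (the empty product of $3$-cycles); if they share exactly one letter, say $(a\;b)(a\;c)$, then a direct check gives $(a\;b)(a\;c) = (a\;c\;b)$; and if they are disjoint then a direct check gives $(a\;b)(c\;d) = (a\;c\;b)(a\;c\;d)$. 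Substituting these into $\sigma = (t_1 t_2)(t_3 t_4)\cdots(t_{2\ell-1}t_{2\ell})$ exhibits $\sigma$ as a product of $3$-cycles. The hypothesis $n \ge 3$ enters only to guarantee that $3$-cycles exist in $S_n$.

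I do not expect a serious obstacle here: the content is the case analysis for a pair of transpositions, and the only points needing care are verifying the cycle identity $(a\;b)(c\;d) = (a\;c\;b)(a\;c\;d)$ under the convention that the rightmost factor acts first, checking that the three cases are genuinely exhaustive, and noting that in the one-shared-letter case one may first need to relabel using $(a\;b) = (b\;a)$ to bring the pair into the displayed form. All of this is routine multiplication of permutations of the kind illustrated in the examples above.
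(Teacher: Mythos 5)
Your proof is correct, and the case analysis is complete: the identities $(a\;b)(a\;c) = (a\;c\;b)$ and $(a\;b)(c\;d) = (a\;c\;b)(a\;c\;d)$ both check out under the rightmost-first convention, and the latter specializes exactly to the paper's illustrative example $(3\;4)(1\;2) = (3\;1\;4)(1\;2\;3)$. Note, however, that the paper states this theorem as standard background (citing Dummit and Foote) and gives no proof of its own, so there is nothing to compare against; your argument via the sign homomorphism and pairing of transpositions is the textbook route and is entirely consistent with how the paper uses the result.
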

That is, every permutation in $A_n$ can be written as a product of three cycles. For example, consider $(3 \;4)(1\;2) \in A_n$, then
$$ (3\;4)(1\;2) = (3\;1\;4)(1\;2\;3). $$
Another interesting result is that $A_n$ is a \textit{normal} subgroup of $S_n$, meaning that we can mod out the objects of $S_n$ by $A_n$. The resulting equivalence classes forms a subgroup. This is written as $S_n / A_n$. It turns out that for $n \geq 5$, we have the following interesting result.
\begin{lemma}\label{no-other-normals}
    Let $n\ge 5$. Then if $H$ is a non-trivial normal subgroup of $S_n$, then $H = A_n$.
\end{lemma}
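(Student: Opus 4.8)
The plan is to reduce everything to the simplicity of the alternating group. Recall that for $n \ge 5$ the group $A_n$ is simple (see \cite{dummit_foote_text}); I would either quote this or, for a self-contained account, prove it in the usual way by showing that a non-trivial normal subgroup of $A_n$ must contain a $3$-cycle and then invoking Theorem~\ref{A_n_index_generation_lemma} together with the transitivity of $A_n$ on $3$-cycles for $n \ge 5$. Granting simplicity, the argument for Lemma~\ref{no-other-normals} is short.

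First I would intersect $H$ with $A_n$. Since $A_n \trianglelefteq S_n$, the subgroup $H \cap A_n$ is normal in $A_n$, so by simplicity $H \cap A_n$ equals $\{e\}$ or $A_n$. If $H \cap A_n = A_n$, then $A_n \le H \le S_n$, and since $[S_n : A_n] = 2$ by Theorem~\ref{A_n_index_generation_lemma} there is nothing strictly between them; thus $H = A_n$ (the alternative $H = S_n$ being excluded since we take $H$ proper, the intended reading of the statement).

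The remaining task is to rule out $H \cap A_n = \{e\}$. In that case the restriction of the sign homomorphism $S_n \twoheadrightarrow S_n/A_n \cong \ZZ/2\ZZ$ to $H$ is injective, so $|H| \le 2$; as $H \ne \{e\}$ this forces $H = \{e, \tau\}$ with $\tau^2 = e$. Normality gives $g \tau g^{-1} \in H$ for every $g \in S_n$, and conjugation preserves order, so $g\tau g^{-1} = \tau$ for all $g$, i.e.\ $\tau \in Z(S_n)$. I would finish by recording that $Z(S_n) = \{e\}$ for $n \ge 3$: if $\tau(i) = j$ with $j \ne i$, pick $k \notin \{i, j\}$ and note that $(j\;k)\tau(j\;k)$ sends $i$ to $k \ne j$, hence differs from $\tau$. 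This contradiction closes the case, leaving $H = A_n$.

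The only substantive ingredient is the simplicity of $A_n$; if it is cited, the proof is entirely routine. The step I expect to be the real obstacle — should we choose to prove simplicity rather than cite it — is the cycle-type case analysis that extracts a $3$-cycle from an arbitrary non-trivial normal subgroup of $A_n$; the rest (the index-$2$ squeeze and the triviality of the center) needs no ingenuity.
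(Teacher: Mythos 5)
The paper states Lemma~\ref{no-other-normals} as a background fact in its preliminaries and gives no proof of its own, so there is no argument to diverge from; your proof is the standard one (intersect $H$ with $A_n$, invoke simplicity of $A_n$ for $n\ge 5$, squeeze via the index-$2$ inclusion, and kill the $H\cap A_n=\{e\}$ case by triviality of $Z(S_n)$) and it is correct in every step. Your reading that ``non-trivial'' must implicitly mean ``proper and non-trivial'' is also right --- as literally stated the lemma admits $H=S_n$ --- and flagging that is a small but genuine improvement over the paper's phrasing. The only external input is the simplicity of $A_n$, which is exactly the kind of citation the paper itself is making by deferring to \cite{dummit_foote_text}.
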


\subsection{Mind Swapping Machines}
The mind swapping machines can be completely described in terms of permutations. Suppose people $a_1, \cdots, a_k$ use the machine. 
The machine is ordered in such a way that person $a_1$ sits on seat $1$ and so on. The machine then performs the permutation $(a_1\;\cdots \;a_k)$.

Given a collection of people whose minds have been permuted by some sequence of uses of the $k$-machine, we can always write that permutation of minds as a product of 
$k$-cycles. From fundamental results about the symmetric group, the resulting permutation is a product of distinct cycles. 

The problem of getting everyone's mind back to their own body is reduced to finding the inverse of $\tau \in S_n $. Every permutation can be written as a product of disjoint cycles. For example, we can write the permutation $\tau \in S_n$ as a product of disjoint cycles where
$$ \tau = \tau_1 \cdots \tau_k$$
with $\tau_i \in S_n$ for all $i$. Observe that in order to find the inverse of $\tau$, we just need to find the inverse of each $\tau_i$:
$$ \tau^{-1} = \tau_k^{-1} \cdots \tau_1 ^{-1}.$$
Under our restrictions, we will not know which sets of people were cycled in the machine to create the original permutation. Thus, we again require that the inverse of each $\tau_i$ must contain an outsider in each step. 

\begin{example}
    Suppose Amy, Bender, and Fry have used the $3$-machine. Regardless of what order they used the machine, we label people as follows. We start with Amy's body. Her body gets the label $a_1$. Suppose Amy's mind got moved to Fry's body. Fry's body is then labeled $a_2$. We then look for Fry's mind and in this case, the only possibility is that it must be in Bender's body.  We label Bender as $a_3$, thus meaning Bender's mind is in Amy's body. The permutation is then written as 
    $$\tau =  (a_1\;a_2\;a_3)$$
    Of course, if the machine allowed the same group of people to use the machine more than once, we could just the machine two more times to get everyone back. Since this is not allowed under our restriction, the goal is then to find outsiders that have not used the machine yet to help revert their minds. That is, we want to write the inverse of $\tau$ as a product of distinct $3$-cycles. 
\end{example}

\section{Keeler's Result}
In the original problem, two people switch minds and realize they cannot switch back. By the end of the episode in which this mind-switching occurs, the characters' minds are all scrambled. Keeler's original solution uses two outsiders to get everyone back into their original bodies. We will reprove this solution in a novel manner. We first note that our solution as proposed in Theorem \ref{solve_single_cycle_m_2} is not exactly the same as Keeler's solution \cite{elder_thesis}. Nevertheless, our algorithm works similarly to Keeler's, while allowing for a clearer proof. 

We call our two outsiders $x$ and $y$, letting $x = n +1$ and $y = n +2$. We define $S_{n+2} \setminus S_n$ to be the collection of permutations that permute $x$ or $y$. 

\begin{theorem} \label{solve_single_cycle_m_2}
Given any cycle $ \tau_k = (a_1 \cdots a_k) \in S_n$, its inverse can be expressed as a product of distinct transpositions in $S_{n+2} \setminus S_n$ in the following form: 
\begin{equation}
    \tau_k^{-1} = (x\; y) \bigg[ \prod_{\ell=2}^{k}(x\; a_\ell) \bigg] (y\; a_1) (y\; a_2) (x\; a_1). 
\end{equation}
\end{theorem}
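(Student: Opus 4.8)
The plan is to verify the claimed identity by direct computation, tracking where each element is sent by the right-hand side and checking it agrees with $\tau_k^{-1} = (a_k\; a_{k-1}\; \cdots\; a_1)$. Write $P = (x\; y)\big[\prod_{\ell=2}^{k}(x\; a_\ell)\big](y\; a_1)(y\; a_2)(x\; a_1)$, where the product $\prod_{\ell=2}^k (x\;a_\ell)$ is read as $(x\;a_2)(x\;a_3)\cdots(x\;a_k)$ applied with the usual right-to-left convention. First I would record two facts that make the bookkeeping manageable: (i) the composite $(x\;a_2)(x\;a_3)\cdots(x\;a_k)$ equals the $k$-cycle $(x\; a_k\; a_{k-1}\; \cdots\; a_2)$, which one checks by the same cascading argument used in the Example computing $(1\;5)(1\;2\;3)(1\;4)$; and (ii) the three trailing transpositions satisfy $(y\;a_1)(y\;a_2)(x\;a_1) = (x\; a_1\; y\; a_2)$ composed appropriately — again a short cycle computation. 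Substituting, $P = (x\;y)\,(x\;a_k\;a_{k-1}\cdots a_2)\,(x\;a_1\;y\;a_2)$, and then I would multiply these three cycles together explicitly.

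The key steps, in order: (1) establish the two reductions (i) and (ii) above as small lemmas or inline computations; (2) compose the resulting three cycles, chasing the orbit of a generic $a_i$ with $2 \le i \le k-1$, the orbit of $a_1$, the orbit of $a_k$, and finally the orbits of $x$ and $y$; (3) observe that the traced permutation fixes $x$ and $y$ and acts on $\{a_1,\dots,a_k\}$ as $a_i \mapsto a_{i-1}$ (indices mod $k$), i.e. as $(a_1\; a_k\; a_{k-1}\;\cdots\; a_2) = \tau_k^{-1}$; (4) check distinctness of the transpositions appearing in $P$: the factors $(x\;a_2),\dots,(x\;a_k)$ are pairwise distinct since the $a_\ell$ are, $(x\;y)$ involves $y$ so differs from all of them, $(y\;a_1)$ and $(y\;a_2)$ are distinct from each other and from the $x$-transpositions, and $(x\;a_1)$ is new because $a_1 \notin \{a_2,\dots,a_k\}$; hence all $k+2$ transpositions are distinct. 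Finally, (5) note every transposition listed moves $x$ or $y$, so each lies in $S_{n+2}\setminus S_n$, completing the proof.

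The main obstacle is purely organizational rather than conceptual: the orbit-chasing in step (2) has several cases (the generic $a_i$, the boundary indices $a_1$ and $a_k$, and the outsiders $x,y$), and it is easy to make an off-by-one error in the index arithmetic or to mishandle the composition order of the trailing transpositions. I would mitigate this by committing explicitly to the right-to-left convention stated in the Example, and by first doing the computation in a small case — say $k=3$, recovering the displayed formula $(1\;2)^{-1}$-type identity from the introduction — to fix signs and conventions before writing the general argument. Once reductions (i) and (ii) are in hand, the remaining composition of three cycles is routine and the distinctness check is immediate from the indexing.
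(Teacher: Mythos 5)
Your proposal is correct, but it takes a genuinely different route from the paper. The paper proves the identity by induction on $k$: it sets $\taubar_k = \prod_{\ell=2}^{k}(x\;a_\ell)$, verifies the base case $k=2$ by inspection, and passes from $k$ to $k+1$ by appending $(x\;a_{k+1})$, conjugating it past $(x\;a_1)$ to produce $(a_1\;a_{k+1})$, and using $\tau_k^{-1}(a_1\;a_{k+1}) = \tau_{k+1}^{-1}$. You instead collapse the two telescoping blocks into explicit cycles --- $(x\;a_2)\cdots(x\;a_k) = (x\;a_k\;a_{k-1}\cdots a_2)$ and the trailing three transpositions into a $4$-cycle --- and then multiply three cycles directly by orbit chasing. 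Both arguments are sound; the induction trades the multi-case orbit chase for a single clean conjugation step, while your direct computation is self-contained, exposes the product as a composition of three explicit cycles, and avoids the slightly awkward reformulation $\taubar_k = (x\;y)\tau_k^{-1}(x\;a_1)(y\;a_2)(y\;a_1)$ that the paper inducts on. Two small slips to fix when you write it out: with the right-to-left convention the trailing block is $(y\;a_1)(y\;a_2)(x\;a_1) = (x\;y\;a_2\;a_1)$, not $(x\;a_1\;y\;a_2)$ (these are inverse $4$-cycles of one another, so the order matters for the final trace); and the total number of distinct transpositions is $k+3$, not $k+2$, namely $(x\;y)$, the $k-1$ factors $(x\;a_\ell)$ for $2\le\ell\le k$, and the three factors $(y\;a_1)$, $(y\;a_2)$, $(x\;a_1)$, matching the paper's remark following the theorem.
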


\begin{proof}
    Let $\tau = (a_1 \cdots a_k) \in S_n$ be a $k$-cycle. We then define 
   \begin{equation} 
        \taubar_k = \prod_{\ell=2}^{k}(x\; a_\ell).
   \end{equation}
We claim that 
\begin{equation}
\tau_k^{-1} = (x\; y)\taubar_k(y\; a_1)(y\; a_2)(x\; a_1),
\end{equation}
or equivalently, 
\begin{equation}
\taubar_k = (x\; y)\tau_k^{-1}(x\; a_1)(y\; a_2)(y\; a_1).
\end{equation}\\
We will prove this using induction on $\taubar_k$. For our base case, in which two people's minds are cycled, we have
\begin{equation}
    \tau_2 = (a_1\; a_2).
\end{equation}
By inspection, we see that 
\begin{equation} 
\tau^{-1}_2 = (x\; y)(x\; a_2)(y\; a_1)(y\; a_2)(x\; a_1).
\end{equation}
We verify that $\taubar_2$ has the desired form by rewriting $\tau^{-1}_2$ as follows:
\begin{equation} 
\tau^{-1}_2 = (x\; y)\taubar_2(y\; a_1)(y\; a_2)(x\; a_1), 
\end{equation}
thus obtaining our desired form,
\begin{equation}
    \taubar_2 = (x\; y)\tau^{-1}_2(x\; a_1)(y\; a_2)(y\; a_1).
\end{equation}

We assume inductively that $\taubar_{k} = (x\; y)\tau^{-1}_k(x\; a_1)(y\; a_2)(y\; a_1)$ holds for some $k \geq 2$. Let $\tau_{k+1} = (a_1 \cdots a_k)$ be a cycle of length $k+1$.
\begin{align}
\taubar_{k+1} &=\bigg[\prod_{\ell=2}^{k+1}(x\; a_\ell)\bigg]\\  
&= \bigg[\prod_{\ell=2}^{k}(x\; a_\ell)\bigg](x\; a_{k+1})\\
&=\taubar_{k}(x\; a_{k+1}).
\end{align}
By our inductive hypothesis,
\begin{align}
\taubar_{k+1} &= (x\; y)\tau_k^{-1}(x\; a_1)(y\; a_2)(y\; a_1)(x\; a_{k+1})\\
&=(x\; y)\tau_k^{-1}(x\; a_1)(x\; a_{k+1})(y\; a_2)(y\; a_1)\\
&=(x\; y)\tau_k^{-1}(a_1\; a_{k+1})(x\; a_1)(y\; a_2)(y\; a_1).
\end{align}
We know that $\tau_k^{-1} = (a_k\; a_{k-1}\cdots a_1)$, so $\tau_k^{-1}(a_1\; a_{k+1}) = \tau_{k+1}^{-1}$. Thus, we have
\begin{equation}
\taubar_{k+1} = (x\; y)\tau_{k+1}^{-1}(x\; a_1)(y\; a_2)(y\; a_1),
\end{equation}
giving us our desired result.
\end{proof}

\begin{remark}
The permutation $\tau^{-1}$ can be expressed as a product of $k+3$ distinct transpositions. 
\end{remark}

\begin{definition}
    Let $\tau = (a_1 \cdots a_k)$ be an arbitrary $k$-cycle in $S_n$. Then we define 
    \begin{equation}\label{car_def}
        \taucar := \taubar (y \; a_1)(y \; a_2)(x \; a_1) = 
    \bigg[ \prod_{\ell=2}^{k}(x \; a_\ell) \bigg]
    (y \; a_1)(y \; a_2)(x \; a_1).\end{equation} 
    
    \let\qed\relax
    \end{definition}

\begin{remark}\label{rewrite_single_cycle_m_2} 
    We note that can use $\taucar$ to rewrite the result from Theorem \ref{solve_single_cycle_m_2} more concisely, which will make subsequent abstraction of the problem clearer:
    given a $k$-cycle $ \tau = (a_1 \cdots a_k) \in S_n$, its inverse can be written in $S_{n+2} \setminus S_n$ as 
    \[ \tau^{-1} = (x \; y)\taucar . \]
    We can rewrite this as $\taucar = (x \; y) \tau^{-1}$, allowing us to derive the following identity:
    \begin{equation}
        \taucar \tau = (x \; y) \tau^{-1} \tau = (x \; y) .
    \end{equation}
\end{remark}

\smallskip

We now look at the case where we have a general permutation $ \sigma \in S_n$ which is not necessarily a single cycle. 

\begin{theorem}

Given $ \sigma \in S_n$, $\sigma^{-1}$ can be written as a product of distinct transpositions in $S_{n+2} \setminus S_n$. Specifically, if 
$\sigma = \tau_1 \cdots \tau_j$ for disjoint cycles $\tau_i \in S_n$ with $1\leq i\leq j$, then $\siginv$ can be written in the form
\[ \sigma^{-1} = 
\begin{cases}
    (xy) \taucar_j \cdots \taucar_1 & \text{if $j$ is odd} \\
    \taucar_j \cdots \taucar_1 & \text{if $j$ is even} . \\ 
\end{cases}
\]

\end{theorem}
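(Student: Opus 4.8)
The plan is to bootstrap the single-cycle result. Theorem~\ref{solve_single_cycle_m_2}, in the form recorded in Remark~\ref{rewrite_single_cycle_m_2}, already gives $\taucar_i = (x\;y)\,\tau_i^{-1}$ for each cycle $\tau_i$, so the only genuinely new ingredient needed is one commutation fact, after which the case distinction on the parity of $j$ is pure bookkeeping.

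First I would record the standing observations. Since the $\tau_i$ are disjoint they commute, so $\sigma^{-1} = \tau_j^{-1}\tau_{j-1}^{-1}\cdots\tau_1^{-1}$; and since each $\tau_i\in S_n$ fixes both $x=n+1$ and $y=n+2$, the transposition $(x\;y)$ and each $\tau_i^{-1}$ have disjoint support and hence commute, $(x\;y)\,\tau_i^{-1}=\tau_i^{-1}\,(x\;y)$. Substituting $\taucar_i = (x\;y)\tau_i^{-1}$ into $\taucar_j\taucar_{j-1}\cdots\taucar_1$ and sliding every copy of $(x\;y)$ to the far left past the $\tau_i^{-1}$'s then yields
\[
\taucar_j\,\taucar_{j-1}\cdots\taucar_1 \;=\; (x\;y)^{j}\,\tau_j^{-1}\tau_{j-1}^{-1}\cdots\tau_1^{-1} \;=\; (x\;y)^{j}\,\sigma^{-1}.
\]
Because $(x\;y)^2=\id$, this is $\sigma^{-1}$ when $j$ is even, and multiplying on the left by $(x\;y)$ gives $\sigma^{-1}=(x\;y)\,\taucar_j\cdots\taucar_1$ when $j$ is odd --- precisely the two displayed formulas. (Alternatively one could induct on $j$: the base case $j=1$ is Theorem~\ref{solve_single_cycle_m_2} verbatim, and the inductive step peels off $\taucar_j=(x\;y)\tau_j^{-1}$ and uses the same commutation.)

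It then remains to check that the right-hand side really is a product of \emph{distinct} transpositions all lying in $S_{n+2}\setminus S_n$. Writing $\tau_i=(a^{(i)}_1\cdots a^{(i)}_{k_i})$ with $k_i\ge 2$ and expanding $\taucar_i$ via \eqref{car_def}, the transpositions occurring in $\taucar_i$ are exactly $(x\;a^{(i)}_1),\dots,(x\;a^{(i)}_{k_i})$ together with $(y\;a^{(i)}_1)$ and $(y\;a^{(i)}_2)$; each moves $x$ or $y$, so lies in $S_{n+2}\setminus S_n$, and within a single $\taucar_i$ they are pairwise distinct because the $a^{(i)}_\ell$ are distinct and none of them equals $(x\;y)$ (all labels are $\le n$). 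For $i\ne i'$, disjointness of $\tau_i$ and $\tau_{i'}$ makes the label sets $\{a^{(i)}_\ell\}$ and $\{a^{(i')}_\ell\}$ disjoint, so no transposition is repeated across different factors; and in the odd case the extra leading $(x\;y)$ differs from all of them. This produces the asserted expression for $\siginv$.

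I expect this last paragraph --- the distinctness bookkeeping --- to be the only delicate point, since it is exactly where the hypothesis that the cycles $\tau_i$ are disjoint is used; the algebraic identity itself falls out immediately once one notices that $(x\;y)$ commutes with every $\tau_i^{-1}$.
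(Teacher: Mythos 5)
Your proof is correct and follows essentially the same route as the paper: both rest on the identity $\taucar_i=(x\;y)\tau_i^{-1}$ from Remark~\ref{rewrite_single_cycle_m_2} together with the fact that $(x\;y)$ commutes with every $\tau_i^{-1}\in S_n$, your version collecting the $(x\;y)$'s by direct substitution where the paper multiplies by $\sigma$ and cancels to the identity. Your final paragraph verifying that the transpositions are pairwise distinct is a worthwhile addition that the paper's proof leaves implicit.
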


\begin{proof}
    Let $\sigma \in S_n$. Theorem \ref{perm_disjoint_cycles} tells us that there exists $j\geq 1$ such that $\sigma$ can be written as a product of $j$ disjoint cycles:   
\begin{equation} \sigma = \tau_1 \cdots \tau_j = \prod_{i=1}^j \tau_i. \end{equation}
    
    We first have the case when $j$ is odd.
    Consider 
    \begin{equation}\gamma = \xycycle \taucar_j \cdots \taucar_1 = \xycycle \prod_{i=0}^{j-1} \taucar_{j-i}. \end{equation}
    We then have
    \begin{align}
        \gamma \sigma 
            &= \xycycle \prod_{i=0}^{j-1} \taucar_{j-i} \; \prod_{i=1}^j \tau_i \\
            &= \xycycle \bigg[ \prod_{i=0}^{j-2} \taucar_{j-i} \bigg] 
            \taucar_1 \tau_1 \bigg[ \prod_{i=2}^{j} \tau_i \bigg] \\
            &= \xycycle \bigg[ \prod_{i=0}^{j-2} \taucar_{j-i} \bigg] 
            \xycycle \bigg[ \prod_{i=2}^{j} \tau_i \bigg] .
    \end{align}
    Because $\tau_i \in S_n$ for all $i \geq 1$, $\tau_i$ does not contain the elements $x \tor y$ at any point. This means that $\tau_i \tand (x \; y)$ are disjoint, so we can commute them:
    \begin{equation}
        \gamma \sigma = \xycycle \bigg[ \prod_{i=0}^{j-2} \taucar_{j-i} \bigg] 
            \bigg[ \prod_{i=2}^{j} \tau_i \bigg] \xycycle .
    \end{equation}
    We then repeat this process, cancelling each $\taucar_i$ with $\tau_i$ to get $\xycycle$, then commuting $\xycycle$ to the far right of the equation. We will do this $j$ times, once for each $\tau_i$, leaving us with
    \begin{align}
        \gamma \sigma &= \xycycle \xycycle^j \\
            &= \xycycle^{j+1} \\  
            &= \left( \xycycle^{2} \right)^{(j+1)/2} \\
            &= (e)^{(j+1)/2} \\
            &= e
    \end{align}
    So, for all $j\geq 1$ with $j$ odd, we have shown that $\gamma = \xycycle \prod_{i=0}^{j-1} \taucar_{j-i} = \siginv.$ \\

    We now look at the case where $j$ is even. We still have $\sigma = \tau_1 \cdots \tau_j = \prod_{i=1}^j \tau_i$. Consider the product
    \begin{equation} \theta = \taucar_1 \cdots \taucar_j = \prod_{i=0}^{j-1} \taucar_{j-i}. \end{equation}
    We then have
    \begin{align}
        \theta \sigma &= \prod_{i=0}^{j-1} \taucar_{j-i} \prod_{i=1}^j \tau_i \\ 
            &= \bigg[ \prod_{i=0}^{j-2} \taucar_{j-i} \bigg] \taucar_1 \tau_1
                \bigg[ \prod_{i=2}^j \tau_i \bigg] \\
            &= \bigg[ \prod_{i=0}^{j-2} \taucar_{j-i} \bigg] \xycycle
                \bigg[ \prod_{i=2}^j \tau_i \bigg] \\
            &= \bigg[ \prod_{i=0}^{j-2} \taucar_{j-i} \bigg]
                \bigg[ \prod_{i=2}^j \tau_i \bigg] \xycycle ,
    \end{align}
    where we've used the same logic as in the odd case to commute $\xycycle$. We repeat this process $j-1$ more times, once for each $\tau_i$ remaining in the right product. This gives us x
    \begin{align}
        \theta \sigma &= \xycycle^j \\
            &= \left( \xycycle^2 \right)^{j/2} \\ 
            &= \left( e \right)^{j/2} \\ 
            &= e.
    \end{align}
    Thus, we have 
    $\theta = \prod_{i=0}^{j-1} \taucar_{j-i} = \taucar_j \cdots \taucar_1 = \sigma^{-1}$,
    as desired. 
\end{proof}

\section{Machines of Larger Sizes}
Now suppose the machine swaps $m$ people at at time, cyclically. That is, 
$$ 1 \to 2 \to 3 \to \cdots m \to 1$$
To extend the rules from Keeler's original problem, we will assume that the same group of people cannot use the machine more than once but that individually, each person can use the machine multiple times. Reframed in terms of permutations, we have our main result for this section. 

\begin{theorem}\label{main-n-theorem}
    Let $m,n\in\NN$, $m\ge3$, $n\ge m$. Let $H_m \subseteq S_n$ denote the subgroup generated by all $m$-cycles. If $\sigma\in H_m$, then there exist $\mu_1, \cdots, \mu_r$ such that
    \begin{enumerate}
        \item 
            $\mu_i \in S_{n+d}\setminus S_n$, where
            $d = 
            \begin{cases}
            (m-2) & \text{if } m \text{ is odd}\\
            3(\frac{m}{2}-1) & \text{if } m \text{ is even}
            \end{cases} $
        
        \item $\sigma^{-1} = \mu_1 \cdots \mu_r$, where $\mu_i$ is an $m$-cycle permuting a distinct set of $m$ elements for all $1\leq i \leq r$.
    \end{enumerate}
    
\end{theorem}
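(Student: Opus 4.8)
The plan is to first pin down the group $H_m$, then reduce to expressing an arbitrary element of $H_m$ as a product of distinct outsider-$m$-cycles, and finally — exactly as in Section~2 — carry this out by building a product $G$ that \emph{neutralizes} $\sigma$, i.e.\ $G\sigma=e$, so that $G=\sigma^{-1}$. For the first part: any conjugate of an $m$-cycle is again an $m$-cycle, so $H_m$ is closed under conjugation and hence is a normal subgroup of $S_n$, and it is nontrivial since $n\ge m$. For $n\ge 5$, Lemma~\ref{no-other-normals} tells us the only proper nontrivial normal subgroup of $S_n$ is $A_n$, so $H_m\in\{A_n,S_n\}$; since an $m$-cycle is even precisely when $m$ is odd, $H_m=A_n$ for $m$ odd and $H_m=S_n$ for $m$ even, with the remaining cases $n=3,4$ checked by hand. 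In every case $\sigma^{-1}\in H_m$, so it suffices to show that each $\rho\in H_m$ can be written as a product of $m$-cycles in $S_{n+d}$ with pairwise distinct supports, each support meeting $\{n+1,\dots,n+d\}$; taking $\rho=\sigma^{-1}$ then proves the theorem.

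The heart of the argument would be a single-cycle ``gadget'' playing the role of $\taucar$ from Section~2. For a cycle $\tau=(a_1\cdots a_k)\in S_n$ I would construct a product $\widehat\tau$ of $m$-cycles, each supported inside $\{a_1,\dots,a_k\}\cup\{n+1,\dots,n+d\}$, each meeting the outsider set, with pairwise distinct supports, such that $\widehat\tau\,\tau$ is the identity on $\{1,\dots,n\}$ and acts on the $d$ outsiders by a permutation $g$ depending only on the parity of $k$. The basic moves are the identity $(p\,q\,r)=(x\,p\,q)(x\,q\,r)$ for $m=3$ and its length-$m$ thickenings for larger $m$ (for even $m$, for instance, $(a\,b\,c_1\cdots c_{m-2})(c_{m-2}\cdots c_1\,b\,e)=(a\,b\,e)$ with the $c_i$ taken among the outsiders): one absorbs $a_1,\dots,a_k$ into a ``traveling'' $m$-cycle one point at a time and then resets, padding every cycle out to length exactly $m$ with outsiders. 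Since an $m$-cycle with $m$ even is an odd permutation, a single even-length $\tau$ can only be neutralized when there is room for an odd permutation of the outsiders; for $m=3$ there is just one outsider and no such room, so there I would process even-length cycles two at a time, building a joint gadget for $\tau\tau'$ with $g=e$.

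Given the gadgets, the pieces fit together as in Section~2. Write $\sigma=\tau_1\cdots\tau_j$ as a product of disjoint cycles; for $m$ odd, $\sigma\in A_n$ forces an even number of even-length $\tau_i$, which I would pair off, while for $m$ even at most one even-length cycle is left over and is absorbed using the extra outsiders that $d$ provides. Let $G$ be the product of the (paired-up) gadgets, in the order opposite to the $\tau_i$. Each $\tau_i$ has support disjoint from the other cycles and fixes every outsider, so it commutes past the other cycles' gadgets and past each $g$ (which moves only outsiders); peeling the $\tau_i$ off one at a time collapses $G\sigma$ to a product of the outsider permutations $g_i$, which I would arrange to multiply to the identity. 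Distinctness of supports across gadgets is automatic, since different $\tau_i$ contribute $m$-cycles with disjoint sets of non-outsider points; distinctness within a gadget is built into its construction; and every $m$-cycle produced meets the outsider set by design.

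The main obstacle is the single-cycle gadget: keeping each block a genuine $m$-cycle that touches an outsider, keeping all supports pairwise distinct, and still cancelling $\tau$ up to a controlled permutation of the outsiders, all simultaneously. This bookkeeping is precisely what forces the two values of $d$ — the clean $m-2$ for odd $m$, and the larger $3(\tfrac{m}{2}-1)$ for even $m$, where the odd parity of an $m$-cycle together with the need for enough ``fresh'' outsider supports to avoid repeated sets roughly triples the count relative to the odd case.
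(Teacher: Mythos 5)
Your proposal is correct and follows essentially the same route as the paper: identify $H_m$ as $A_n$ or $S_n$ via normality and the parity of $m$, decompose $\sigma$ into disjoint cycles, reduce to $3$-cycles (pairing even-length cycles when $m$ is odd, and using the extra outsiders to invert a leftover transposition when $m$ is even), and invert each $3$-cycle as a pair of $m$-cycles padded with outsiders --- your identity $(a\;b\;c_1\cdots c_{m-2})(c_{m-2}\cdots c_1\;b\;e)=(a\;b\;e)$ is exactly the paper's Lemma~\ref{three-cycle-lemma}, so the ``traveling gadget'' you worry about is not needed. One small slip: the parity obstruction that forces pairing of even-length cycles arises when $m$ is \emph{odd} (every product of $m$-cycles is then an even permutation), not from ``an $m$-cycle with $m$ even being an odd permutation'' as written.
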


We will refer to $S_n$ as the group that permutes $A = \{a_1 \cdots a_n\}$ and $S_{n+d}$ as the group that permutes $A\cup \{x_1\cdots x_{d}\}$. So an $m$-cycle $\mu \in S_{n+d}\setminus S_n$ is a permutation that permutes at least one element in $\{x_1\cdots x_{d}\}$.
The part of this theorem that distinguishes our problem from Elder's is (2). As an example:
$$(a_1\; a_2\; a_3)^{-1} = (a_1\; a_3\; a_2) = (a_1\; a_2\; a_3\; x_2\; x_1)(a_3\; a_1\; a_2\; x_1\; x_2)$$
is a perfectly acceptable solution under Elder's variation as \[\langle(a_1\; a_2\; a_3\; x_2\; x_1)\rangle \neq \langle(a_3\; a_1\; a_2\; x_1\; x_2)\rangle.\] But for our purposes we cannot use both of these cycles, as each cycle permutes the same 5 elements. It is worthwhile to note that any solution under our restriction can be used under Elder's restriction (perhaps not as optimally). The case in which $\sigma$ is a $3$-cycle is of particular interest to us. We will prove this case in the following lemma.

\begin{lemma}\label{three-cycle-lemma}
     Let $m,n\in\NN$, $m \geq 3$, $n\ge m$. If $\tau \in S_n$ is a $3$-cycle, then there exist $\mu_1, \mu_2$ where $\mu_i \in S_{n + (m-2)}\setminus S_n$ such that 
    \begin{enumerate}
        \item $\tau^{-1} = \mu_1\mu_2$
        
        \item For all $1\leq i \leq 2$, $\mu_i$ is an $m$-cycle permuting a distinct set of $m$ elements
    \end{enumerate}
\end{lemma}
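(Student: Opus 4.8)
The plan is to exhibit $\mu_1$ and $\mu_2$ explicitly and then check by direct computation that $\mu_1\mu_2 = \tau^{-1}$. Since relabeling the elements of $A$ costs nothing, we may assume $\tau = (a_1\; a_2\; a_3)$, so that $\tau^{-1} = (a_1\; a_3\; a_2)$, and the outsiders at our disposal are $x_1,\dots,x_{m-2}$. The guiding idea is that the two $m$-cycles should involve \emph{all} $m-2$ outsiders and should differ only in which of $a_2,\,a_3$ they additionally move; then the outsiders will "telescope away" in the product while the $a_i$'s are shuffled as required. (This is forced up to the natural choice: any element moved by exactly one of $\mu_1,\mu_2$ survives in the support of $\mu_1\mu_2 = \tau^{-1}$, so an outsider used by one $\mu_i$ had better be used by the other.)

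Concretely, I would take
\[
  \mu_2 \;=\; (a_1\; a_3\; x_1\; x_2\; \cdots\; x_{m-2}),
  \qquad
  \mu_1 \;=\; (a_1\; x_{m-2}\; x_{m-3}\; \cdots\; x_1\; a_2).
\]
Both are $m$-cycles; $\mu_2$ permutes $\{a_1,a_3,x_1,\dots,x_{m-2}\}$ and $\mu_1$ permutes $\{a_1,a_2,x_1,\dots,x_{m-2}\}$, two distinct $m$-element sets, and each contains the outsider $x_1$, so $\mu_1,\mu_2 \in S_{n+(m-2)}\setminus S_n$; this gives part (2). For part (1) I would verify $\mu_1\mu_2 = \tau^{-1}$ by tracing the image of each element, recalling that the right-hand factor acts first: $\mu_2$ sends $x_i \mapsto x_{i+1}$ for $i<m-2$ and $x_{m-2}\mapsto a_1$, while $\mu_1$ sends $x_i\mapsto x_{i-1}$ for $i\ge 2$, $x_1\mapsto a_2$, and $a_1\mapsto x_{m-2}$. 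Composing, every $x_i$ is fixed by $\mu_1\mu_2$, and on $\{a_1,a_2,a_3\}$ one gets $a_1\mapsto a_3$, $a_3\mapsto a_2$, $a_2\mapsto a_1$, i.e.\ exactly $(a_1\; a_3\; a_2)=\tau^{-1}$.

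I do not expect a genuine obstacle here: the only real content is writing down the correct explicit pair of cycles, after which the lemma is a finite verification; the one spot to be careful is the telescoping bookkeeping for the outsiders $x_2,\dots,x_{m-2}$. It is worth recording the small-case check $m=3$, where the formulas specialize to $\mu_2=(a_1\;a_3\;x_1)$ and $\mu_1=(a_1\;x_1\;a_2)$ with $(a_1\;x_1\;a_2)(a_1\;a_3\;x_1)=(a_1\;a_3\;a_2)$; the general case is this base case with the remaining outsiders threaded through both cycles so as to cancel. This lemma is then the engine behind Theorem \ref{main-n-theorem}: an arbitrary $\sigma\in H_m$ is reduced to a product of $3$-cycles (using that $A_n$ is generated by $3$-cycles, with separate attention to parity when $m$ is even), and each such $3$-cycle's inverse is realized by the construction above.
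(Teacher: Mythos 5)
Your proposal is correct and follows essentially the same approach as the paper: both exhibit an explicit pair of $m$-cycles that each thread through all $m-2$ outsiders (so the outsiders telescope away) and whose product is $(a_1\;a_3\;a_2)$, then verify by tracing elements. The only difference is cosmetic — the paper's two cycles overlap in $a_3$ (it uses $(a_1\; a_3\; x_1\cdots x_{m-2})(a_3\; a_2\; x_{m-2}\cdots x_1)$) while yours overlap in $a_1$ — and your verification is in fact written out more carefully than the paper's one-line proof.
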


\begin{proof}
    Let $\tau = (a_1\; a_2\; a_3)$. Then $$\tau^{-1} = (a_1\; a_3\; x_1 \cdots x_{m-2})(a_3\; a_2\; x_{m-2} \cdots x_1)$$
\end{proof}

 It is worthwhile noting that in this lemma, the solution does not depend on the parity of $m$. 

\begin{lemma}\label{H_m_normal_lemma}
    Let $m,n\in\NN$, $m\ge3$, $n\ge m$. $H_m \subseteq S_n$ is a normal subgroup.
\end{lemma}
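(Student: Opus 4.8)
The plan is to exploit the elementary fact that conjugation in $S_n$ preserves cycle type: the set $C_m$ of \emph{all} $m$-cycles in $S_n$ is closed under conjugation, and a subgroup generated by a conjugation-stable set is automatically normal. So the proof will not try to identify $H_m$ concretely (that would be more delicate: for $m$ odd one would want $H_m = A_n$, for $m$ even $H_m = S_n$, and invoking Lemma~\ref{no-other-normals} would need $n \ge 5$ and separate low-$n$ checks). Instead the argument stays at the level of generators.

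First I would note that $H_m$ is a subgroup by fiat, being defined as $\langle C_m\rangle$ where $C_m \subseteq S_n$ is the set of $m$-cycles (this set is nonempty precisely because $n \ge m$). Thus the only thing left to verify is that $g H_m g^{-1} = H_m$ for every $g \in S_n$. The key computation is the standard conjugation identity: for $g \in S_n$ and an $m$-cycle $\tau = (c_1\ c_2\ \cdots\ c_m)$,
\[
g\,\tau\,g^{-1} = \bigl(g(c_1)\ g(c_2)\ \cdots\ g(c_m)\bigr),
\]
which is again an $m$-cycle. Hence conjugation by $g$ maps $C_m$ into $C_m$; since conjugation by $g^{-1}$ provides a two-sided inverse, it is in fact a bijection $C_m \to C_m$.

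Finally I would conclude as follows. Conjugation by $g$ is an automorphism of $S_n$, so it sends $\langle C_m\rangle$ onto $\langle g\,C_m\,g^{-1}\rangle$; by the previous step $g\,C_m\,g^{-1} = C_m$, so this is just $\langle C_m\rangle = H_m$. Therefore $g H_m g^{-1} = H_m$ for all $g \in S_n$, i.e. $H_m \trianglelefteq S_n$. I do not expect any genuine obstacle here; the only point requiring a little care is to organize the argument around "the generating set is stable under conjugation" rather than around an explicit description of $H_m$, which keeps the proof uniform in $m$ and $n$.
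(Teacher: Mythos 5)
Your proof is correct and takes essentially the same route as the paper: both rely on the conjugation identity $g(c_1\ \cdots\ c_m)g^{-1} = (g(c_1)\ \cdots\ g(c_m))$ to show that conjugation permutes the generating set of $m$-cycles, hence fixes $H_m$. Your write-up is slightly more careful in spelling out why stability of the generating set under conjugation implies normality of the generated subgroup, but the argument is the same.
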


\begin{proof}
    Let $\tau = (a_1\cdots a_m) \in S_n$ be an $m$-cycle and thus a generator of $H_m$. For all $\sigma \in S_n$,
    $$\sigma \tau \siginv = (\sigma(a_1) \cdots \sigma(a_m)).$$
    Since $\sigma$ is a bijection, there exists $\tau' \in H_m$ such that $$\tau' = (\siginv(a_1)\cdots\siginv(a_m)).$$ 
    Thus, $\sigma\tau'\siginv = \tau$. It is clear that conjugation permutes the generators of $H_m$, hence $\sigma H_m \siginv = H_m$.
\end{proof}

We will prove theorem \ref{main-n-theorem} in even and odd sections. 

\subsection{Case 1: $m$ is odd}  We will use the following without proof.

\begin{lemma}\label{H_m_A_n_odd}
    Let $m,n\in\NN$, $n\ge m \geq 3$. $H_m = A_n$ if and only if $m$ is odd.
\end{lemma}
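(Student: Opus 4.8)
The plan is to separate the two implications, and to first prove the unconditional containment $A_n\subseteq H_m$, valid for every $n\ge m\ge 3$ regardless of the parity of $m$.

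To get $A_n\subseteq H_m$, I would use that $A_n$ is generated by $3$-cycles (Theorem~\ref{A_n_index_generation_lemma}) and show that every $3$-cycle already lies in $H_m$. Given three distinct elements of $A$, relabel them $a_1,a_2,a_3$ and choose $m-3$ further distinct elements $a_4,\dots,a_m$ of $A$ (possible since $n\ge m$). Set $\alpha=(a_1\,a_2\,a_3\,a_4\,\cdots\,a_m)$ and $\beta=(a_2\,a_1\,a_3\,a_4\,\cdots\,a_m)$; note $\beta$ is the conjugate of $\alpha$ by the transposition $(a_1\,a_2)$, so both are $m$-cycles and $\beta\alpha^{-1}$ is the commutator of $(a_1\,a_2)$ with $\alpha$. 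Tracking where each $a_i$ goes shows $\beta\alpha^{-1}=(a_1\,a_2\,a_3)$, so this $3$-cycle lies in $H_m$. Since the three elements were arbitrary, every $3$-cycle lies in $H_m$, hence $A_n\subseteq H_m$. (Alternatively, one computes a single such $3$-cycle in $H_m$ and then invokes the normality of $H_m$ in $S_n$, Lemma~\ref{H_m_normal_lemma}, together with the fact that all $3$-cycles are conjugate in $S_n$.)

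With this in hand the equivalence is immediate. If $m$ is odd then an $m$-cycle is an even permutation, so every generator of $H_m$ lies in $A_n$ and $H_m\subseteq A_n$; combined with $A_n\subseteq H_m$ this gives $H_m=A_n$. If instead $m$ is even then an $m$-cycle is an odd permutation, so $H_m$ contains an odd permutation, $H_m\not\subseteq A_n$, and therefore $H_m\ne A_n$; this is the contrapositive of the forward direction.

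I do not expect a genuine obstacle. The only points requiring care are the small cases $m=3$ and $n\in\{3,4\}$: the identity $\beta\alpha^{-1}=(a_1\,a_2\,a_3)$ holds uniformly for all $m\ge 3$ (when $m=3$ no padding elements are added), and when $n\le 4$ the hypothesis ``$m$ odd, $m\le n$'' forces $m=3$, for which $H_3=A_n$ is already contained in Theorem~\ref{A_n_index_generation_lemma}. The substantive step is simply locating the pair of $m$-cycles whose product is a prescribed $3$-cycle and verifying the product by following the images of the $a_i$.
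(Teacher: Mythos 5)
Your proof is correct, but it takes a genuinely different route from the paper's. The paper handles the forward direction exactly as you do (an $m$-cycle with $m$ even is an odd permutation, so $H_m\not\subseteq A_n$), but for the converse it splits into cases: for $m=3$ it cites the generation of $A_n$ by $3$-cycles (Theorem~\ref{A_n_index_generation_lemma}), and for $m\ge 5$ it combines the normality of $H_m$ (Lemma~\ref{H_m_normal_lemma}) with the classification of normal subgroups of $S_n$ for $n\ge 5$ (Lemma~\ref{no-other-normals}). You instead prove the unconditional containment $A_n\subseteq H_m$ constructively, by exhibiting each $3$-cycle as a product $\beta\alpha^{-1}$ of two $m$-cycles; the computation $\beta\alpha^{-1}=(a_1\,a_2\,a_3)$ checks out, including at $m=3$. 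Your argument is more elementary and uniform: it avoids the case split at $m=3$, does not rely on the normal-subgroup classification (whose application in the paper also tacitly needs $H_m$ to be proper and non-trivial, so that it cannot be $S_n$ or $\{e\}$ --- properness following from $H_m\subseteq A_n$ when $m$ is odd), and as a bonus yields $A_n\subseteq H_m$ for even $m$ as well, which is most of the work in the paper's subsequent lemma that $H_m=S_n$ when $m$ is even. The paper's route is shorter on the page, but only because it outsources the substantive step to quoted structure theory.
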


\begin{proof}
    $\Rightarrow$ Assume that $m$ is even, and let $\tau$ be an $m$-cycle. We know that $\tau$ is certainly in $H_m$ by definition. Since $\tau$ is a cycle of even length, it is an odd permutation. Hence $\tau \notin A_n$. \\
    $\Leftarrow$
    If $m=3$, the result follows from Lemma \ref{A_n_index_generation_lemma}.
    Suppose $m\geq 5$. From Lemma \ref{H_m_normal_lemma} we know $H_m$ is normal. Thus the result follows from Lemma \ref{no-other-normals}.
\end{proof}

This is everything we need to prove the case where the machine switches an odd number of people. To introduce some notation, we denote the elements permuted by $S_{n+(m-2)}\setminus S_n$ in a list by: 
\begin{align*}
    &\evenlist{x} = x_1\ x_2 \cdots x_{m-2}\\
    &\enelinv{x} = x_{m-2} \cdots x_2\ x_1\\
\end{align*}

So from lemma \ref{three-cycle-lemma} if $\tau = (a_1\; a_2\; a_3) \in S_n$, then its inverse in $S_{n+(m-2)}\setminus S_n$ under our new notation can be written as
$$\tau^{-1} = (a_1\; a_3\; [x])(a_3\; a_2\; \enelinv{x})$$

\begin{theorem}\label{odd_thm}
    Let $m,n\in\NN$, with $m$ odd, $n\ge m \ge3$. If $\sigma\in H_m$, then there exist $\mu_1, \cdots, \mu_r \in S_{n + (m-2)}\setminus S_n$ for some $r\ge 1$ such that 
    \begin{enumerate}
        
        \item $\sigma^{-1} = \mu_r \cdots \mu_1$
        
        \item For all $1\leq i \leq r$, $\mu_i$ is an $m$-cycle permuting a distinct set of $m$ elements
    \end{enumerate}
    
\end{theorem}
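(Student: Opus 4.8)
The plan is to reduce the statement to the single $3$-cycle case treated in Lemma~\ref{three-cycle-lemma}, by decomposing $\sigma$ into $3$-cycles, and then to spend the real effort arranging that the resulting $m$-cycles permute pairwise distinct sets of $m$ elements.

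\textbf{Reduction.} Since $m$ is odd, Lemma~\ref{H_m_A_n_odd} identifies $H_m$ with $A_n$, so $\sigma\in A_n$, and by Theorem~\ref{A_n_index_generation_lemma} we may write $\sigma=\rho_1\cdots\rho_s$ as a product of $3$-cycles. Then $\sigma^{-1}=\rho_s^{-1}\cdots\rho_1^{-1}$, and applying Lemma~\ref{three-cycle-lemma} with $\tau=\rho_\ell$ expresses each $\rho_\ell^{-1}$ as a product of two $m$-cycles in $S_{n+(m-2)}\setminus S_n$ of the form $(p\;r\;x_1\;\cdots\;x_{m-2})(r\;q\;x_{m-2}\;\cdots\;x_1)$, where $\{p,q,r\}$ is the support of $\rho_\ell$. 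Concatenating gives $\sigma^{-1}=\mu_r\cdots\mu_1$ with $r=2s$. (If $\sigma=e$ one takes $r=0$; if one insists on $r\ge1$, note that $e$ is itself a product of three $m$-cycles with pairwise distinct supports once enough elements are available.) The point to record is that every $m$-cycle produced this way has support $\{a,b\}\cup\{x_1,\dots,x_{m-2}\}$ for some pair $\{a,b\}\subseteq A$, so two such supports agree precisely when the pairs $\{a,b\}$ agree.

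\textbf{Controlling the supports.} The entire content of the proof is therefore to choose the $3$-cycle decomposition, and the way Lemma~\ref{three-cycle-lemma} is invoked on each factor, so that the $2s$ pairs $\{a,b\}$ produced are pairwise distinct; a careless decomposition can repeat a pair. I would build the decomposition from the disjoint-cycle structure $\sigma=\zeta_1\cdots\zeta_t$: cycles on disjoint supports yield $m$-cycles with disjoint pairs, so it suffices to treat, separately, each odd-length $\zeta_i$ and each matched pair of even-length cycles (they are matched because $\sigma\in A_n$). An odd-length cycle telescopes, $(a_1\cdots a_{2j+1})=(a_1\;a_2\;a_3)(a_3\;a_4\;a_5)\cdots(a_{2j-1}\;a_{2j}\;a_{2j+1})$; and before invoking Lemma~\ref{three-cycle-lemma} on a $3$-cycle $(p\;q\;r)$ one may cyclically rotate it, which changes which vertex plays the role of the ``pivot'' and hence lets us omit any one of the three pairs inside $\{p,q,r\}$ from the two $m$-cycle supports. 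Choosing the pivots to be $a_2,a_4,\dots,a_{2j}$ makes all pairs coming from $\zeta_i$ of the shape $\{a_{2\ell},a_{2\ell\pm1}\}$, which are pairwise distinct. For a matched pair of even-length cycles $(b_1\cdots b_{2i})$ and $(c_1\cdots c_{2k})$, I would peel off one transposition from each, $(b_1\cdots b_{2i})=(b_1\;b_2)(b_2\;b_3\cdots b_{2i})$ and similarly for the $c$'s, use disjointness to bring the two transpositions together as $(b_1\;b_2)(c_1\;c_2)=(b_1\;c_1\;b_2)(b_1\;c_1\;c_2)$, telescope the two remaining odd-length cycles on $\{b_2,\dots,b_{2i}\}$ and $\{c_2,\dots,c_{2k}\}$ as above, and then check that the pivots can be chosen so the pairs from $(b_1\;c_1\;b_2)$, $(b_1\;c_1\;c_2)$ and the two telescoped tails are all distinct.

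\textbf{Main difficulty.} The conceptual input is already packaged in Lemmas~\ref{three-cycle-lemma}, \ref{H_m_normal_lemma}, \ref{H_m_A_n_odd} and Theorem~\ref{A_n_index_generation_lemma}, so the only place the argument can break is this combinatorial bookkeeping: verifying that, with a fixed pivot rule, the pairs (equivalently the $m$-cycle supports) are genuinely distinct across the whole product, in particular at the seams where consecutive telescoped $3$-cycles share a vertex and where the gluing transpositions of an even-cycle pair meet the telescoped tails. I expect that verification to be the bulk of the write-up; everything else is assembly, and the parity of $m$ enters only through Lemma~\ref{H_m_A_n_odd} and the fact that $d=m-2$ outsiders suffice in Lemma~\ref{three-cycle-lemma}.
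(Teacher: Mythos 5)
Your proposal is correct and follows essentially the same route as the paper: reduce via $H_m=A_n$ (Lemma \ref{H_m_A_n_odd}), telescope each odd-length disjoint cycle into $3$-cycles sharing only adjacent vertices, pair up the even-length cycles and convert their peeled-off transpositions into two $3$-cycles, then apply Lemma \ref{three-cycle-lemma} and check that the resulting pairs of $\phi$-elements (hence the $m$-cycle supports) are pairwise distinct. Your pivot choice and transposition identities differ cosmetically from the paper's, but the distinctness bookkeeping you flag as the main work goes through in both versions.
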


\begin{proof}
    By the previous lemma $H_m = A_n$ so we need only consider the case for which $\sigma$ is even.
    We can write $\sigma$ as a product of disjoint cycles. $$\sigma = \tau_1\cdots\tau_r$$
    Let $\tau_i$ have length $k_i$. If $k_i$ is odd for all $i$, then $\tau_i \in A_n$ for all $i$. Let $$\tau_i = (a_{i,1}\cdots a_{i,k_i}).$$
    Since $k_1$ is odd, we can write
    $$(a_{i,1}\cdots a_{i,k_i}) = (a_{i,1}\; a_{i,2}\; a_{i,3})(a_{i,3}\; a_{i,4}\; a_{i,5})\cdots(a_{i,k_i-2}\; a_{i,k_i-1}\; a_{i,k_i}).$$
    By lemma \ref{three-cycle-lemma}, we know we can write the inverses of each of these $3$-cycles as a product of two $m$-cycles in $S_{n + (m-2)}\setminus S_n$. Additionally, any given $3$-cycle is disjoint from all others with exception to the ones next to it. Therefore, it is sufficient to check that the inverse of $ (a_{i,1}\; a_{i,2}\; a_{i,3})(a_{i,3}\; a_{i,4}\; a_{i,5})$ written as $m$-cycles satisfies \textit{(2)} in Theorem \ref{odd_thm}. We proceed using lemma \ref{three-cycle-lemma}
    \begin{align*}
        &(a_{i,1}\; a_{i,2}\; a_{i,3})^{-1} = (a_{i,1}\; a_{i,3}\; a_{i,2}) = (a_{i,1}\; a_{i,3}\; \evenlist{x})(a_{i,3}\; a_{i,2}\; \enelinv{x})\\
        &(a_{i,3}\; a_{i,4}\; a_{i,5})^{-1} = (a_{i,3}\; a_{i,5}\; a_{i,4}) = (a_{i,3}\; a_{i,5}\; \evenlist{x})(a_{i,5}\; a_{i,4}\; \enelinv{x}).
    \end{align*}
    All $m$-cycles above permute a distinct set of $m$ elements, so we can define $$\mu_i = (a_{i,k_i-2}\; a_{i,k_i}\;\evenlist{x})(a_{i,k_i}\; a_{i,k_i-1}\; \enelinv{x})\cdots(a_{i,1}\; a_{i,3}\; \evenlist{x})(a_{i,3}\; a_{i,2}\; \enelinv{x}).$$
    Where $\mu_i = \tau_i^{-1}$ hence
    $$\siginv = \mu_r\cdots\mu_1.$$
    Now suppose there exists $i$ such that $k_i$ is even. Since $\sigma \in A_n$, there must also exist $j\neq i$ such that $k_j$ is even. Let
 
    \begin{align*}
        &\tau_i = (a_1\cdots a_{k_i}) = (a_1\cdots a_{k_i-1})(a_{k_i-1}\; a_{k_i})\\
        &\tau_j = (b_1\cdots b_{k_j}) = (b_1\cdots b_{k_j-1})(b_{k_j-1}\; b_{k_j})
    \end{align*}

    So to fix $\tau_i$ and $\tau_j$, we show it is possible to fix their product
    \begin{align*}
        \tau_i \tau_j &= (a_1\cdots a_{k_i-1})(a_{k_i-1}\; a_{k_i})(b_1\cdots b_{k_j-1})(b_{k_j-1}\; b_{k_j})\\
        &= (a_1\cdots a_{k_i-1})(b_1\cdots b_{k_j-1})(a_{k_i-1}\; a_{k_i})(b_{k_j-1}\; b_{k_j})
    \end{align*}

    We have shown how to write the inverse for a cycle of odd length. Thus for this case it is sufficient to show we can write the inverse of $(a_{k_i-1}\; a_{k_i})(b_{k_j-1}\; b_{k_j})$ under our restrictions. 
    $$(a_{k_i-1}\; a_{k_i})(b_{k_j-1}\; b_{k_j}) = (a_{k_i-1}\; a_{k_i}\; b_{k_j-1})(a_{k_i}\; b_{k_j-1}\; b_{k_j})$$
    So we have
    $$((a_{k_i-1}\; a_{k_i})(b_{k_j-1}\; b_{k_j}))^{-1} = (a_{k_i}\; b_{k_j}\; b_{k_j-1})(a_{k_i-1}\; b_{k_j-1}\; a_{k_i}).$$

    And from our previous work,     
    \begin{align*}
        &(a_{k_i}\; b_{k_j}\; b_{k_j-1}) = (a_{k_i}\; b_{k_j}\; \evenlist{x})(b_{k_j}\; b_{k_j-1}\; \enelinv{x})\\
        &(a_{k_i-1}\; b_{k_j-1}\; a_{k_i}) = (a_{k_i-1}\; b_{k_j-1}\; \evenlist{x})(b_{k_j-1}\; a_{k_i}\; \enelinv{x})
    \end{align*}

    Where each $m$-cycle permutes a distinct set of elements. 
\end{proof}

\subsection{Case 2: $m$ is even}

\begin{lemma}
    Let $m\ge2$ be an even number and $n\ge m$. Then $H_m = S_n$.
\end{lemma}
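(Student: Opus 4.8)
The plan is to prove $H_m = S_n$ by showing that $H_m$ contains a $3$-cycle together with an odd permutation, and then squeezing $H_m$ between $A_n$ and $S_n$. I would first dispose of the edge case $m = 2$, where $H_2$ is by definition the subgroup generated by all transpositions --- the classical generating set of $S_n$ --- so that $H_2 = S_n$. Hence from now on assume $m \ge 4$ is even, with $n \ge m$.

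The crux is to exhibit a $3$-cycle as a product of $m$-cycles. Take the two $m$-cycles
\[
  \sigma = (a_1\ a_2\ a_3\ a_4\ \cdots\ a_m), \qquad
  \tau = (a_2\ a_3)\,\sigma\,(a_2\ a_3) = (a_1\ a_3\ a_2\ a_4\ a_5\ \cdots\ a_m),
\]
both of which are genuine $m$-cycles in $S_n$ (using $n \ge m$), hence both lie in $H_m$ by definition. Then $\sigma\tau^{-1} \in H_m$, and since $\sigma(a_2\ a_3)\sigma^{-1} = (\sigma(a_2)\ \sigma(a_3)) = (a_3\ a_4)$ we get $\sigma\tau^{-1} = (a_3\ a_4)(a_2\ a_3) = (a_2\ a_4\ a_3)$, a $3$-cycle. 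So $H_m$ contains a $3$-cycle.

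Next I would invoke Lemma \ref{H_m_normal_lemma}: $H_m$ is normal in $S_n$, and since all $3$-cycles are conjugate in $S_n$, $H_m$ contains every $3$-cycle. (Equivalently, one may simply rerun the construction on any prescribed triple of letters, which works because $n \ge m$.) Because $n \ge m \ge 4 \ge 3$, Theorem \ref{A_n_index_generation_lemma} gives that the $3$-cycles generate $A_n$, so $A_n \subseteq H_m$. Finally, $m$ even means the $m$-cycle $\sigma$ is an odd permutation, so $H_m \not\subseteq A_n$; combined with $A_n \subseteq H_m \subseteq S_n$ and $[S_n : A_n] = 2$, this forces $H_m = S_n$.

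I expect the only real obstacle to be the middle step --- choosing a pair of $m$-cycles whose product has support of size exactly $3$ (the conjugation trick above is designed to keep the support small). Everything afterward is routine, leaning on the already-established normality of $H_m$ and standard facts about $A_n$. The one point to watch is that this construction requires $m \ge 3$, which is precisely why $m = 2$ is peeled off at the outset.
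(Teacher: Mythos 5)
Your argument is correct, but it reaches the conclusion by a genuinely different route than the paper. The paper reduces to showing that every transposition lies in $H_m$: it multiplies two explicitly chosen $m$-cycles to produce an $(m-1)$-cycle, then multiplies by a third $m$-cycle to peel off the transposition $(x\;y)$, and concludes since transpositions generate $S_n$. You instead produce a $3$-cycle as the product of two $m$-cycles via the commutator-style identity $\sigma\tau^{-1}=\sigma(a_2\,a_3)\sigma^{-1}(a_2\,a_3)=(a_3\,a_4)(a_2\,a_3)=(a_2\,a_4\,a_3)$, then use normality of $H_m$ (Lemma \ref{H_m_normal_lemma}) and conjugacy of $3$-cycles to get $A_n\subseteq H_m$, and finish with the index-$2$ argument using the fact that an even-length cycle is an odd permutation. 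Both proofs are sound; your computation checks out, and peeling off $m=2$ is the right move since the normality lemma is only stated for $m\ge 3$. What the paper's approach buys is an explicit word in $m$-cycles representing a transposition --- three $m$-cycles suffice --- which foreshadows the constructive content needed in Lemma \ref{even-fix-transpostion}; what yours buys is a shorter, less error-prone argument that avoids verifying a delicate product of long cycles, and it runs structurally parallel to the paper's treatment of the odd case (normality plus a generation fact), while being even more elementary than invoking Lemma \ref{no-other-normals} since the index-$2$ trick suffices.
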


\begin{proof}

    We know that $S_n$ is generated by transpositions so it is sufficient to show that all transpositions lie in $H_m$. Let $(x\;y) \in S_n$. Notice that one can obtain an $(m-1)$-cycle from the product of two $m$-cycles as follows:
    $$(y\; a_1\; x\; a_2\cdots a_{m-2})(y\; x\; a_1\ a_2\cdots a_{m-2}) = (y\; a_2\; a_4\; a_6\cdots a_{m-2}\; a_1\; a_3\cdots a_{m-3}).$$
    Then we can obtain our transposition by taking the product of this $(m-1)$-cycle with a certain $m$-cycle: 
    \begin{align*}
        &(x\; y\; a_{m-3}\cdots a_3\; a_1\; a_{m-2}\cdots a_6\; a_4\; a_2)(y\; a_2\; a_4\; a_6\cdots a_{m-2}\; a_1\; a_3\cdots a_{m-3})\\
        &=(x\; y)(y\; a_{m-3}\cdots a_3\; a_1\; a_{m-2}\cdots a_6\; a_4\; a_2)(y\; a_2\; a_4\; a_6\cdots a_{m-2}\; a_1\; a_3\cdots a_{m-3})\\
        &=(x\; y).
    \end{align*}
    Thus $(x\; y) \in H_m$. 
\end{proof}

\begin{lemma}\label{even-fix-transpostion}

    Let $m,n\in\NN$, with $m$ even, $n\ge m \ge 4
    $. Let $\sigma\in S_n$ be a transposition. Then there exist $\mu_1, \mu_2, \mu_3 \in S_{n + 3(\frac{m}{2}-1)}\setminus S_n$
    
    \begin{enumerate}
        
        \item $\sigma^{-1} = \mu_1 \mu_2 \mu_3$
        
        \item For all $1\leq i \leq 3$, $\mu_i$ is an $m$-cycle permuting a distinct set of $m$ elements
    \end{enumerate}

\end{lemma}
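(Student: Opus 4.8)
\emph{Sketch of the argument.} Since a transposition is its own inverse, write $\sigma^{-1}=\sigma=(a\;b)$. Put $\ell=\tfrac m2-1$, a positive integer because $m\ge 4$ is even, and note $3\ell=3(\tfrac m2-1)=d$; label the $d$ outsiders as $u_1,\dots,u_{2\ell}$ together with $v_1,\dots,v_\ell$. The plan is a two‑stage construction: first peel off one $m$-cycle to reduce $(a\;b)$ to an $(m-1)$-cycle, then write that $(m-1)$-cycle as a product of the remaining two $m$-cycles on \emph{different} supports.

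\emph{Stage 1.} Take $\mu_1=(a\;b\;u_1\;u_2\;\cdots\;u_{2\ell})$. Evaluating $\mu_1^{-1}(a\;b)$ directly (apply $(a\;b)$, then $\mu_1^{-1}$) yields the $(m-1)$-cycle
\[\rho:=\mu_1^{-1}\sigma^{-1}=\mu_1^{-1}(a\;b)=(b\;u_{2\ell}\;u_{2\ell-1}\;\cdots\;u_1),\]
supported on the $m-1$ elements $b,u_1,\dots,u_{2\ell}$ and fixing $a$. Thus it suffices to produce $m$-cycles $\mu_2,\mu_3$ with $\rho=\mu_2\mu_3$ whose supports are $m$-element sets, pairwise distinct and distinct from $\supp(\mu_1)$; then $\mu_1\mu_2\mu_3=\mu_1\rho=(a\;b)=\sigma^{-1}$, and since each $\mu_i$ moves some $u_j$ or $v_j$ we get $\mu_i\in S_{n+d}\setminus S_n$, giving clauses (1) and (2).

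\emph{Stage 2.} Split $\supp(\rho)$ into the singleton $\{b\}$ and the two blocks $\{u_1,\dots,u_\ell\}$ and $\{u_{\ell+1},\dots,u_{2\ell}\}$, and use the $\ell+1=\tfrac m2$ ``spare'' elements $a,v_1,\dots,v_\ell$ (the real $a$ is free again after Stage 1, and the $v_j$ are the leftover outsiders). Set
\[\mu_2=(b\;a\;v_1\;\cdots\;v_\ell\;u_\ell\;u_{\ell-1}\;\cdots\;u_1),\qquad \mu_3=(b\;u_{2\ell}\;u_{2\ell-1}\;\cdots\;u_{\ell+1}\;v_\ell\;v_{\ell-1}\;\cdots\;v_1\;a).\]
Each is a cycle on exactly $m$ elements, and a routine element‑by‑element check (equivalently, verifying $\mu_3=\mu_2^{-1}\rho$) shows $\mu_2\mu_3=\rho$: $\mu_2$ reproduces $\rho$ on the first block and threads out through the spares, $\mu_3$ reproduces $\rho$ on the second block and threads the spares back, so the spares $a,v_1,\dots,v_\ell$ are fixed by the product. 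The three supports are pairwise distinct because $u_{2\ell}$ lies in $\supp(\mu_1)$ and $\supp(\mu_3)$ but not $\supp(\mu_2)$ (as $\ell\ge1$), while $u_1$ lies in $\supp(\mu_1)$ and $\supp(\mu_2)$ but not $\supp(\mu_3)$. As $\{u_j\}\cup\{v_j\}$ exhausts all $d$ outsiders, this is the desired decomposition.

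\emph{Main obstacle.} Stage 1 and the cycle identity $\mu_2\mu_3=\rho$ are routine; the real content is clause (2), that the three $m$-cycles act on \emph{distinct} $m$-element sets. One cannot simply feed $\rho$ into the identity of the preceding lemma (which already expresses an $(m-1)$-cycle as a product of two $m$-cycles), since there the two $m$-cycles have the \emph{same} support. Separating them is exactly what forces the extra $\ell=\tfrac m2-1$ outsiders $v_1,\dots,v_\ell$ to be interleaved into both $\mu_2$ and $\mu_3$, and this is why the even case needs $3(\tfrac m2-1)$ outsiders rather than the $m-2$ that sufficed for a $3$-cycle in Lemma~\ref{three-cycle-lemma}. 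The one bookkeeping point to get right is that the blocks of $\supp(\rho)$, the spare elements, and $\supp(\mu_1)$ are chosen so that no two of $\supp(\mu_1),\supp(\mu_2),\supp(\mu_3)$ coincide while the product still collapses back to $(a\;b)$.
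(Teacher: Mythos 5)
Your construction is correct --- the identity $\mu_2\mu_3=\rho$ and the support counts and pairwise distinctness all check out, including the boundary case $\ell=1$ (i.e.\ $m=4$) --- and it is essentially the same approach as the paper's: both exhibit an explicit product of three $m$-cycles whose supports are the three ways of pairing $\{a,b\}$ with two out of three blocks of $\tfrac{m}{2}-1$ outsiders. The only difference is presentational: the paper simply writes down its formula in one line, whereas you derive a formula of the same shape in two stages, which has the side benefit of motivating why $3(\tfrac{m}{2}-1)$ outsiders are used.
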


To introduce some notation, we denote the elements permuted by $S_{n + 3(\frac{m}{2}-1)}\setminus S_n$ in three groups. We denote them in a list by 
\begin{align*}
    &\evenlist{w} = x_1\ x_2 \cdots x_{\frac{m}{2}-1}\\
    &\enelinv{w} = x_{\frac{m}{2}-1} \cdots x_2\ x_1\\
    &\evenlist{y} = y_1\ y_2 \cdots y_{\frac{m}{2}-1}\\
    &\enelinv{y} = y_{\frac{m}{2}-1} \cdots y_2\ y_1\\
    &\evenlist{z} = z_1\ z_2 \cdots z_{\frac{m}{2}-1}\\
    &\enelinv{z} = z_{\frac{m}{2}-1} \cdots z_2\ z_1.
\end{align*}

Notice that $|\evenlist{w}| + |\evenlist{y}| + |\evenlist{z}| = 3(\frac{m}{2}-1)$.
\begin{proof}
    Let $\sigma = (a_1\; a_2)$. 
    Then $$\siginv = (a_1\ \enelinv{z}\ \enelinv{y}\ a_2)(a_1\ \enelinv{w}\ a_2\ \evenlist{z})(\evenlist{w}\ a_1\ \evenlist{y}\ a_2)$$
\end{proof}
    
\begin{theorem}
    Let $m,n\in\NN$, with $m$ even, $n\ge m \ge 4
    $. If $\sigma\in H_m = S_n$, then there exist $\mu_1, \cdots, \mu_r$ where $\mu_i \in S_{n + 3(\frac{m}{2}-1)}\setminus S_n$ such that 
    \begin{enumerate}
        
        \item $\sigma^{-1} = \mu_1 \cdots \mu_r$
        
        \item For all $1\leq i \leq r$, $\mu_i$ is an $m$-cycle permuting a distinct set of $m$ elements
    \end{enumerate}
    
\end{theorem}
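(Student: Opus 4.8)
The plan is to follow the template of Theorem~\ref{odd_thm}, using the fact just proved that $H_m = S_n$, so that $\sigma$ ranges over all of $S_n$. Write $\sigma$ as a product of disjoint cycles $\sigma = \tau_1\cdots\tau_r$ (discarding fixed points), so $\sigma^{-1} = \tau_r^{-1}\cdots\tau_1^{-1}$ with the factors pairwise commuting. It then suffices to express each $\tau_i^{-1}$ as a product of $m$-cycles lying in $S_{n+3(m/2-1)}\setminus S_n$ and, afterward, to check that the full list of $m$-cycles produced as $i$ runs over $1,\dots,r$ consists of cycles permuting pairwise distinct sets of $m$ elements. Throughout I would reserve the $3(\frac m2-1)$ outsiders as the three blocks $[w],[y],[z]$ of Lemma~\ref{even-fix-transpostion}, and when invoking Lemma~\ref{three-cycle-lemma} I would take its $m-2 = 2(\frac m2-1)$ auxiliary letters to be the block $[w]$ followed by the block $[y]$.

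For a cycle $\tau_i = (a_{i,1}\cdots a_{i,k_i})$ of odd length, factor it into consecutive $3$-cycles $(a_{i,1}\,a_{i,2}\,a_{i,3})(a_{i,3}\,a_{i,4}\,a_{i,5})\cdots(a_{i,k_i-2}\,a_{i,k_i-1}\,a_{i,k_i})$ exactly as in Theorem~\ref{odd_thm}, and apply Lemma~\ref{three-cycle-lemma} to each factor; this replaces each $3$-cycle $(c_1\,c_2\,c_3)$ by two $m$-cycles, with supports $\{c_1,c_3\}\cup[w]\cup[y]$ and $\{c_2,c_3\}\cup[w]\cup[y]$. For a cycle of even length, write $\tau_i = (a_{i,1}\cdots a_{i,k_i-1})(a_{i,k_i-1}\,a_{i,k_i})$, treat the odd-length cycle $(a_{i,1}\cdots a_{i,k_i-1})$ as above, and feed the leftover transposition $(a_{i,k_i-1}\,a_{i,k_i})$ into Lemma~\ref{even-fix-transpostion}, which returns three $m$-cycles with supports $\{a_{i,k_i-1},a_{i,k_i}\}\cup[z]\cup[y]$, $\{a_{i,k_i-1},a_{i,k_i}\}\cup[w]\cup[z]$, and $\{a_{i,k_i-1},a_{i,k_i}\}\cup[w]\cup[y]$ (when $k_i=2$ there is no odd part and $\tau_i$ goes straight into Lemma~\ref{even-fix-transpostion}). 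Every $m$-cycle so obtained moves at least one letter of $[w]$, $[y]$, or $[z]$, so each lies in $S_{n+3(m/2-1)}\setminus S_n$, and each support has size $2 + 2(\frac m2-1) = m$, so each is a genuine $m$-cycle.

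The substance of the argument, and the step I expect to take the most care, is condition (2): no two $m$-cycles in the assembled list may have the same support. From the support formulas, two supports can coincide only if they have the same ``outsider part''; a support containing $[z]$ comes from Lemma~\ref{even-fix-transpostion} and cannot match one of the form $\{\cdot,\cdot\}\cup[w]\cup[y]$, so a collision reduces in every case to equality of two two-element subsets of $A$. I would then rule these out: within a single $3$-cycle factorization the relevant pairs $\{a_{i,1},a_{i,3}\}, \{a_{i,2},a_{i,3}\}, \{a_{i,3},a_{i,5}\}, \{a_{i,4},a_{i,5}\},\dots$ are pairwise distinct since the $a_{i,\bullet}$ are distinct; the transposition pair $\{a_{i,k_i-1},a_{i,k_i}\}$ split off from an even cycle differs from every pair in that cycle's odd part because the letter $a_{i,k_i}$ occurs in no such pair; and for $i\ne j$ the cycles $\tau_i,\tau_j$ have disjoint supports, hence the pairs --- and so the $m$-cycle supports --- coming from $\tau_i$ and from $\tau_j$ are disjoint. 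With distinctness established, listing the $m$-cycles in the order prescribed by $\sigma^{-1}=\tau_r^{-1}\cdots\tau_1^{-1}$ yields the desired factorization (the case $\sigma=e$ being trivial).
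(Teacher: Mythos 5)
Your proof follows the same route as the paper's: decompose $\sigma$ into disjoint cycles, factor each odd-length cycle into consecutive overlapping $3$-cycles handled by Lemma \ref{three-cycle-lemma}, and split a transposition off each even-length cycle to feed into Lemma \ref{even-fix-transpostion}. You are in fact more careful than the paper's own argument, which omits both the identification of Lemma \ref{three-cycle-lemma}'s $m-2$ auxiliary letters with the blocks $[w]$ and $[y]$ (needed to stay within the advertised $3(\frac{m}{2}-1)$ outsiders) and the verification that all the resulting $m$-cycle supports are pairwise distinct; your handling of both points is correct.
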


\begin{proof}
    Since $\sigma$ can be written as a product of disjoint cycles, it is sufficient to prove for the case when $\sigma$ is a cycle. Suppose $\sigma$ has odd length. Then $\sigma \in A_n$. Thus it can be written as a product of three cycles and by lemma \ref{three-cycle-lemma}, we are done. Suppose $\sigma$ has even length $\sigma = (a_1\; a_2 \cdots a_k).$
    $$(a_1\; a_2 \cdots a_k) = (a_1\; a_2 \cdots a_{k-1}) (a_{k-1}\; a_k).$$ 
    $(a_1\; a_2\cdots a_{k-1})$ is a cycle of odd length and by lemma \ref{even-fix-transpostion}, we can write the inverse of the transposition.
\end{proof}

\begin{example}
Let $\sigma = (a_1\; a_2\; a_3)(a_4\; a_5\; a_6\; a_7) \in S_n$. Then 
\begin{align*}
    (a_1\; a_2\; a_3)^{-1} &= (a_1\; a_3\; \evenlist{x})(a_3\; a_2\; \enelinv{x})\\
    (a_4\; a_5\; a_6\; a_7)^{-1} &= ((a_4\; a_5\; a_6)(a_6\; a_7))^{-1}\\
    &=(a_6\ \enelinv{z}\ \enelinv{y}\ a_7)(a_6\ \enelinv{w}\ a_7\ \evenlist{z})(\evenlist{w}\ a_6\ \evenlist{y}\ a_7)(a_4\; a_6\; \evenlist{x})(a_6\; a_5\; \enelinv{x})
\end{align*}

\end{example}

\section{Optimality for 3-machines}


In our context, an optimal solution is defined to have the least possible number of machine uses. Evans et al. previously explored the problem of optimizing the solution to Keeler's Theorem using the 2-machine \cite{optimal_paper}. Keeler's original 2-machine solution has been shown to be optimal only when the number of disjoint cycles $r$ that make up the original permutation is equal to 1 or 2. Evans et al. found an optimal solution for the case in which $r \geq 3$ \cite{optimal_paper}. We first expand this question by looking at the optimal solution for 3-machines. 

\begin{definition}\label{objects_def}
    For the rest of this section, we define
    $\phi = \{ a_1, a_2, \cdots, a_n \}$ to be the set of elements that $S_n$ acts on. Specifically, if $S$ denotes the symmetric group, we have $S_n = S(\phi)$.
\end{definition}

\begin{definition}\label{outsiders_set_def}
    We denote the set of $d$ outsiders as $X = \{x_1, \cdots x_d\}$. We then define $\phi \cup X$ to be the set of elements that $A_{n+d}$ acts on 
    (in other words $A_{n+d} = A(\phi \cup X)$).
\end{definition}

\begin{definition}\label{sumSn_def}
    Let $\alpha \in \sgroupminus{n+1}{n}$ be written as a product of disjoint cycles 
    $\alpha = \tau_1 \cdots \tau_r$, where each $\tau_i$ with $1 \leq i \leq r$ has length $k_i\geq 2$. Then,
    \begin{equation} \sumSn{\alpha} 
        = \sum_{i=1}^r \sum_{
                            \substack{
                            a_j \in \phi \\ 
                            \tau_i(a_j) \neq a_j
                            }
                            } 1  .
    \end{equation}
    In words, $\sumSn{\alpha}$ is the number of times that any element from $\phi$ shows up in a $\tau_i$ cycle. 
\end{definition}

\begin{lemma}\label{3_machine_count}
    Let $\beta$ be a product of 3-cycles in $\sgroupminus{n+1}{n}$ such that each 3-cycle contains $x$. Then the number of 3-cycles composing $\beta$ is $\frac{C(\beta)}{2}$. 
\end{lemma}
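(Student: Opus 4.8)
The plan is to induct on the number $s$ of $3$-cycles in the given factorization $\beta = \mu_1 \cdots \mu_s$, tracking how the set of elements of $\phi$ moved by the partial products grows. Since each $\mu_i$ is a $3$-cycle in $\sgroupminus{n+1}{n}$ containing $x$, it has the form $\mu_i = (x\; a\; b)$ with $a, b \in \phi$, and so $\mu_i$ moves exactly two elements of $\phi$. It will be useful to record the identity $(x\; a\; b) = (x\; b)(x\; a)$, which rewrites $\beta$ as a product of $2s$ transpositions, each containing $x$. For the base case $s = 1$ the permutation $\beta = (x\; a\; b)$ is its own disjoint-cycle decomposition, it moves precisely the two elements $a, b \in \phi$, and hence $\sumSn{\beta} = 2$, so the number of $3$-cycles equals $1 = \sumSn{\beta}/2$.

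For the inductive step, write $\beta = \beta' \mu_{s+1}$ with $\beta' = \mu_1 \cdots \mu_s$ a product of $s$ $3$-cycles through $x$ and $\mu_{s+1} = (x\; a\; b)$. I would show that passing from $\beta'$ to $\beta = \beta'(x\; b)(x\; a)$ enlarges the set of elements of $\phi$ moved by exactly the two elements $a$ and $b$, so that $\sumSn{\beta} = \sumSn{\beta'} + 2$; together with the inductive hypothesis $s = \sumSn{\beta'}/2$ this yields $s + 1 = \sumSn{\beta}/2$, as required. The tool here is the classical description of how multiplication by a transposition alters a disjoint-cycle decomposition: right-multiplying a permutation by $(x\; d)$ merges the cycle through $x$ with the cycle through $d$ when these are different, and splits the cycle through $x$ when $d$ already lies on it. Applying this twice — first for $(x\; a)$, then for $(x\; b)$ — and using that both $\beta'$ and $\beta$ lie in $\sgroupminus{n+1}{n}$ and so move $x$, one checks that the only elements of $\phi$ whose status (moved versus fixed) can change are $a$ and $b$, and that after both multiplications each of $a, b$ is moved by $\beta$.

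The main obstacle is exactly this last verification. A priori, adjoining a $3$-cycle need not enlarge the set of moved elements of $\phi$ by two: a split could create a new fixed point, or two of the star transpositions obtained from neighbouring $\mu_i$'s could cancel, in which case $\sumSn{\beta}$ would fail to increase by the full amount. Controlling this is where the hypotheses must be used — that $\beta$ genuinely lies in $\sgroupminus{n+1}{n}$, that every $3$-cycle shares the single outsider $x$, and the standing assumption of this section that the $\mu_i$ are the distinct machine-uses making up a solution. Concretely, I would split the inductive step into cases according to whether $a$ and $b$ lie in $\supp(\beta') \cup \{x\}$ and, if so, on which cycle of $\beta'$ they sit, and check in each admissible case that the net change in $\sum_i \#\{\, a_j \in \phi : \tau_i(a_j) \neq a_j \,\}$ is $+2$; the inadmissible configurations are precisely those excluded by the hypotheses. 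When it is known in advance that $\beta$ is a single cycle on $\{x\} \cup (\supp(\beta) \cap \phi)$ — the situation in which the lemma is applied — this case analysis collapses, since then $\supp(\beta) \cap \phi$ has size $\sumSn{\beta}$ and the count is immediate.
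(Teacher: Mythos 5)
Your argument attacks a different statement from the one the paper intends, and the version you are trying to prove is false. You read $\sumSn{\beta}$ as the number of elements of $\phi$ moved by the permutation $\beta$ — i.e., as computed from the disjoint-cycle decomposition of $\beta$ itself — and your induction tries to show that appending each $3$-cycle $(x\;a\;b)$ enlarges this set by exactly two. That claim fails: take $\beta = (x\;a\;b)(x\;a\;c)$, a product of two distinct $3$-cycles through $x$, which equals $(x\;b)(a\;c)$ and moves three elements of $\phi$, not four; or $\beta = (x\;a\;b)(x\;b\;a) = e$, which moves none. No hypothesis of the lemma rules these out — it assumes only that $\beta$ is a product of $3$-cycles each containing $x$ — so the ``admissible cases'' you defer to cannot all come out to $+2$. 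The fallback you mention (that $\beta$ is a single cycle in the application) is also unavailable: the lemma is applied to $\sigma^{-1}$, which is generally not a single cycle, and Theorem \ref{3_machine_optimal} needs $\sumSn{\sigma^{-1}} \geq n+r$, which already exceeds the number of elements the permutation $\sigma^{-1}$ actually moves.

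The resolution is that $\sumSn{\beta}$ in this lemma must be computed relative to the \emph{given} factorization $\beta = \mu_1\cdots\mu_s$ into (not necessarily disjoint) $3$-cycles: it is the total number of occurrences, counted with multiplicity over the factors, of elements of $\phi$ among the $\mu_i$. (Definition \ref{sumSn_def} is worded in terms of a disjoint-cycle decomposition, but the way $C$ is used in Theorem \ref{3_machine_optimal} — where one element of each $\tau_i$ is argued to ``show up twice'' in $\sigma^{-1}$ — only makes sense for the factorization count.) Under that reading the proof is one line, and it is the paper's: each $\mu_i$ is a $3$-cycle in $\sgroupminus{n+1}{n}$ containing $x$, hence contains exactly two elements of $\phi$, so $\sumSn{\beta} = 2s$ and $s = \sumSn{\beta}/2$. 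No induction or transposition bookkeeping is needed.
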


\begin{proof}
    Every 3-cycle is in $\sgroupminus{n+1}{n}$ and contains $x$. This means that exactly 2 elements of any given cycle in $\beta$ must be in $\phi$. Thus, $\sumSn{\beta}$ is two times the number of cycles, giving the number of cycles as $\sumSn{\beta} / 2$. 
\end{proof}

\begin{remark}\label{m_less_than_n}
    Let $\sigma \in A_n$ be written as a product of $r$ disjoint cycles,
    $\sigma = \tau_1 \cdots \tau_r$, where $\tau_i$ is a $k_i$-cycle with $k_i \geq 2$. We will assume for all of this section that $k_1 + k_2 + \cdots + k_r = n$. This assumption does not present an isuse, because if $k_1 + k_2 + \cdots + k_r = \ell < n$, there are $n-\ell$ objects that remain unpermuted by $\sigma$ and thus don't need fixing, while the $\ell$ elements that are permuted can be fixed by mimicking the following arguments with $n$ replaced by $\ell$ \cite{optimal_paper}.
\end{remark}

\begin{theorem}\label{3_machine_optimal}
    Let $\sigma \in A_n$ be written as a product of $r$ disjoint cycles,
    $\sigma = \tau_1 \cdots \tau_r$, where $\tau_i$ is a $k_i$-cycle with $k_i \geq 2$ and 
    $k_1 + \cdots + k_r = n$. Consider outsider $x$ which $S_{n+1}$ acts on. We can write $\sigma^{-1} \in S_{n+1}\setminus S_n$ as a product of ${(n+r)\over 2}$ 3-cycles, each of which contains $x$. Further, this solution is optimal in that any construction of $\siginv$ must have at least ${(n+r)\over 2}$ steps. 
\end{theorem}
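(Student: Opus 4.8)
The plan is to prove the theorem in two halves: an explicit construction achieving $(n+r)/2$ steps, and a lower bound argument showing no construction does better. For the construction, I would reduce to the case of a single cycle, as in Remark \ref{m_less_than_n} and the earlier disjoint-cycle arguments: if $\sigma = \tau_1\cdots\tau_r$ with $\tau_i$ a $k_i$-cycle, it suffices to write each $\tau_i^{-1}$ as a product of $(k_i+1)/2$ $3$-cycles through $x$ when $k_i$ is odd, and to handle the even-length cycles in compatible pairs. For a single odd cycle $\tau = (a_1\;a_2\;\cdots\;a_k)$, I would write $\tau$ as a product of $(k-1)/2$ adjacent $3$-cycles $(a_1\,a_2\,a_3)(a_3\,a_4\,a_5)\cdots$, then apply a Keeler-style identity expressing the inverse of each such $3$-cycle through $x$, checking that $C(\tau_i^{-1})$, the total count of $\phi$-elements appearing, works out so that Lemma \ref{3_machine_count} gives exactly the claimed step count. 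Summing over all cycles, $\sum_i (k_i+1)/2 = (n+r)/2$; the even-length cycles contribute the same total since a $k_i$-cycle of even length splits as a $(k_i-1)$-cycle times a transposition, and transpositions get paired off two at a time exactly as in Lemma \ref{even-fix-transpostion}'s spirit but now with $3$-cycles through $x$.

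For optimality, the key invariant is $C(\beta)$ from Definition \ref{sumSn_def}. Any valid solution $\beta$ writing $\sigma^{-1}$ as a product of distinct $3$-cycles each containing $x$ has step count $C(\beta)/2$ by Lemma \ref{3_machine_count}. So it suffices to show $C(\beta) \ge n+r$ for every such $\beta$. The idea is to track, for each orbit of $\sigma$, how many times its elements must appear among the $3$-cycles. Since $\beta = \sigma^{-1}$ actually moves all $n$ elements of $\phi$, each of the $n$ elements of $\phi$ must appear at least once, giving $C(\beta)\ge n$; the extra $+r$ comes from a parity/connectivity obstruction: within the cycles of $\beta$ that touch a given $\sigma$-orbit of size $k_i$, the "$\phi$-portion" must, after deleting the copies of $x$, compose to the $k_i$-cycle $\tau_i^{-1}$, and a product of "paths through $x$" realizing a $k_i$-cycle on $k_i$ points requires the $\phi$-elements to appear a total of at least $k_i+1$ times — one element must repeat. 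Formalizing this, I would argue that the multiset of $\phi$-elements used inside the $x$-cycles touching orbit $i$, viewed as a graph on those $k_i$ vertices (edges recording consecutive $\phi$-neighbors within a cycle), must be connected and hence have at least $k_i$ edges, while an Eulerian/degree count forces one more incidence, yielding $\ge k_i+1$ incidences per orbit and $\sum_i(k_i+1) = n+r$ overall.

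The main obstacle I expect is making the lower bound rigorous: the informal "paths through $x$ realizing a $k_i$-cycle need $k_i+1$ incidences" claim needs a clean combinatorial model. I would set it up by observing that a $3$-cycle through $x$ is either $(x\,a\,b)$ or $(x\,b\,a)$, and composing such cycles corresponds to a walk; the net effect on $\phi$ of the full product restricted to orbit $i$ is $\tau_i^{-1}$, so the "trace" of $x$ through the composition must enter and leave each $\phi$-element in a way whose bookkeeping is a closed walk covering a spanning connected subgraph of the $k_i$ vertices. A spanning connected multigraph on $k_i$ vertices whose edge-count equals the number of $3$-cycles touching orbit $i$ has $\ge k_i - 1$ edges, but since each $3$-cycle contributes two $\phi$-incidences (not one edge cleanly), I would instead directly bound incidences: $C$ restricted to orbit $i$ equals twice the number of such $3$-cycles, and a minimal such decomposition of a $k_i$-cycle using $3$-cycles through a fixed external point is classically $\lceil (k_i+1)/2 \rceil$ when one tracks that the identity on $k_i$ points needs an even number and a $k_i$-cycle needs $\ge (k_i-1)$ transpositions' worth of work. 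I would lean on the corresponding $2$-machine optimality result of Evans et al.\ \cite{optimal_paper} as a template, adapting their counting argument from transpositions through $x$ to $3$-cycles through $x$, where each $3$-cycle does "twice the work" of a transposition but also "costs twice as much" in the $C$-count, so the bound transfers.
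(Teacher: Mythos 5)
Your overall architecture matches the paper's---an explicit construction plus a lower bound via the invariant $C$ and Lemma \ref{3_machine_count}---but both halves have genuine gaps. In the construction, decomposing an odd $k$-cycle into $(k-1)/2$ adjacent $3$-cycles and then inverting each one separately through $x$ cannot reach the target count: a single $3$-cycle of $\phi$-elements fixes $x$ and moves three letters, so it is never equal to one $3$-cycle through $x$, and its inverse needs at least two such steps (its own bound is $(3+1)/2=2$). Your construction therefore uses $2\cdot\frac{k-1}{2}=k-1$ steps per odd cycle, versus the required $(k+1)/2$; these agree only for $k=3$ and yours is strictly worse for $k\ge 5$ (four steps versus three for a $5$-cycle). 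The paper instead builds one chain, $(a_k\,a_{k-1}\,x)\cdots(a_5\,a_4\,x)(a_3\,a_2\,x)(a_2\,a_1\,x)$, in which only the letter $a_2$ repeats, so $C=k+1$ and the step count is exactly $(k+1)/2$; the overlap between consecutive factors is essential and is destroyed if you invert each adjacent $3$-cycle independently. (Even-length cycles are handled by an analogous paired chain $G(\tau_v,\tau_w)$, not by peeling off transpositions.)

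For the lower bound, your target inequality $C(\beta)\ge n+r$ is exactly the right one, and $C\ge n$ is immediate, but the graph-theoretic sketch for the extra $+r$ does not close. Your model places edges only among the $k_i$ vertices of orbit $i$, yet a $3$-cycle of $\beta$ may contain one element of orbit $i$ and one element of a different orbit, so connectivity of the within-orbit graph is not forced; moreover a connected graph on $k_i$ vertices needs only $k_i-1$ edges, i.e.\ $2k_i-2$ incidences, which falls below $k_i+1$ when $k_i=2$, and the appeal to the $2$-machine optimality of Evans et al.\ as a ``template'' is not an argument. The paper's route is more elementary and sidesteps all of this: for each orbit $\gamma$, consider the chronologically first $3$-cycle of $\beta$ that touches $\gamma$. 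At that moment neither $x$ nor any body outside $\gamma$ can yet hold a mind belonging to $\gamma$, so whichever element of $\gamma$ receives a mind in that step receives a foreign one and must appear again in a later $3$-cycle. That forces one repetition per orbit, hence $C\ge n+r$, and Lemma \ref{3_machine_count} converts this to the bound of $\frac{n+r}{2}$ steps. I would rebuild your lower bound along those lines rather than via connectivity.
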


\begin{proof}
    Let $\sigma = \tau_1 \cdots \tau_r$, with each $\tau_i$ a disjoint cycle. 

    We rewrite the equation as $\sigma = \tau_1 \cdots \tau_q \tau_{q+1} \cdots \tau_r$, with $\tau_1 \cdots \tau_q$ representing cycles of odd length and $\tau_{q+1} \cdots \tau_r$ representing cycles of even length. Even-length cycles have odd parity, and thus must occur in pairs. Corresponding to an arbitrary pair of even-length cycles $\tau_v = (b_1 \cdots b_{k_1})$ and $\tau_w = (c_1 \cdots c_{k_2})$,
    we define
    \[ 
    G(\tau_v, \tau_w) = (c_1\; c_{k_2}\; x) \bigg[ \prod_ { \substack { \ell=2 \\ \text{even} } } ^{k_2-2} (c_{\ell + 1} \; c_\ell \; x) \bigg] (b_1\; b_{k_1}\; x) \bigg[ \prod_ { \substack { \ell=2 \\ \text{even} } } ^{k_1-2} (b_{\ell + 1} \; b_\ell \; x) \bigg] (c_{k_2} \; b_{k_1} \; x). 
    \]
    Corresponding to an arbitrary odd-length cycle $\tau_u = (a_1 \cdots a_k)$, we define
    \[
    F(\tau_u) = \bigg[ \prod_ { \substack { \ell=2 \\ \text{even} } } ^{k-1} (a_{\ell + 1} \; a_\ell \; x) \bigg] (a_2 \; a_1 \; x). 
    \]
    Then we set
    \[
    \siginv = F(\tau_1) \cdots F(\tau_q) \cdot G(\tau_{q+1}, \tau_{q+2}) \cdots G(\tau_{r-1}, \tau_r).
    \]

    It can be verified that $\siginv$ is indeed the inverse of $\sigma$ and that it is made up of $\frac{(n+r)}{2}$ steps. \\

    We will now prove that if $\siginv \in S_{n+1} \setminus S_n$ is written as a product of 3-cycles, it must have at least $\frac{(n+r)}{2}$ elements of $\phi$.  By Lemma \ref{3_machine_count}, we just need to show that $\sumSn{\siginv} \geq n+r$. 
    
    We first note that every element of $\phi$ has had its mind permuted away from its body. This is true because we know that the lengths of the $\tau_i$ cycles sum to $n$ and that all $\tau_i$ cycles are disjoint. In order to return each element's mind to its body, the element must therefore be permuted again in the inverse, giving $\sumSn{\siginv} \geq n$. 

    For the following argument, we also specify that every $\tau_i$ cycle is written with $x$ as the rightmost entry.

    We will now show that every cycle $\tau_i$ must have one element from $S_n$ that shows up at least twice in $\siginv$. Since there are $r$ cycles, this will mean that $\sumSn{\siginv} \geq n + r$. 
    
    Observe cycle $\tau_i \in \sigma$ for some $1 \leq i \leq r$, and let $\gamma$ be the set of elements of $\phi$ that are permuted by $\tau_i$. Next, find the right most element of $\gamma$ in $\siginv$, and call the cycle that this right-most element lies in $T$. 

    There are three cases for how $T$ could look, which correspond to the (sum of binomial coefficients) $\binom{2}{1} + \binom{2}{2}$ ways to have $x$ and at least one $\gamma$ element in $T$. First, it could look like 
    $(a_{i,1} \; a_{i,2} \; x)$ for some elements $a_{i,1}, a_{i,2} \in \gamma$. It could also look like $(a_{i,3} \; u \; x)$ for some $u$ in a different $\tau$ cycle and $a_{i,3} \in \gamma$. Finally, it could look like 
    $(v \; a_{i,4} \; x)$ for some $v$ in a different $\tau$ cycle and $a_{i,4} \in \gamma$.

    For our first case where $T = (a_{i,1} \; a_{i,2} \; x)$, we note that $a_{i,1}$ gets whatever mind was in $x$'s body. Because $T$ is the farthest right (and thus chronologically first) cycle in $\siginv$ containing elements of $\gamma$, $x$ must have its own mind or the mind of an element in $\phi \setminus \gamma$. So, $a_{i,1}$ gets a mind that is not from $\gamma$. This means that $a_{i,1}$ definitely does not have its own mind at this point, so it must be used again in another 3-cycle in $\siginv$ to get its mind back.

    We then look at the case that $T = (a_{i,3} \; u \; x)$. Notice that $a_{i,3}$ gets the mind that was in $x$'s body. For the same reasoning as before, $a_{i,3}$ must get used again in another $3$-cycle in $\siginv $ to get its mind back. 

    If $T = (v \; a_{i,4} \; x)$, then $a_{i,4}$ gets the mind that $v$ had. Since $T$ is the farthest right 3-cycle in $\siginv$ containing elements of $\gamma$, $v$'s body must have had $x$'s mind or a mind from $\phi \setminus \gamma$. So, $a_{i,4}$ must be used at least twice in $\siginv$. 
    
    We have shown that for an arbitrary cycle $\tau_i \in \sigma$, at least one element of $\tau_i$ must show up twice in $\siginv$. This is true for all $r$ cycles in $\sigma$, so $\sumSn{\siginv} \geq n + r$. Lemma \ref{3_machine_count} tells us that $\siginv$ must have at least $\frac{(n+r)}{2}$ steps, completing our proof. 
\end{proof}

\begin{lemma}\label{3_C_count}
    Let $\sigma \in S_n$ be written as a product of $r$ disjoint cycles as
    $\sigma = \tau_1 \cdots \tau_r$, where $\tau_i$ is a $k_i$-cycle with $k_i \geq 2$ and $k_1 + \cdots + k_r = n$. For $d\geq 1$, $\siginv \in S_{n+d} \setminus S_n$ satisfies $C(\siginv) \geq n+r$. 
\end{lemma}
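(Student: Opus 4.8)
The plan is to mirror the optimality half of Theorem~\ref{3_machine_optimal}, relaxing it to allow cycles of arbitrary length and any number $d \ge 1$ of outsiders. As in the convention used throughout this section (cf.\ the proof of Theorem~\ref{3_machine_optimal}), write the given expression for the inverse as $\siginv = \mu_1 \cdots \mu_s$, where each $\mu_i$ is a cycle in $\sgroupminus{n+d}{n}$ and hence moves at least one outsider. For $a \in \phi$ let $N(a)$ denote the number of indices $i$ with $\mu_i(a) \ne a$; by Definition~\ref{sumSn_def} we have $\sumSn{\siginv} = \sum_{a \in \phi} N(a)$, so it suffices to show this sum is at least $n + r$.

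First I would record the baseline $N(a) \ge 1$ for every $a \in \phi$. Because $k_1 + \cdots + k_r = n$ and the $\tau_i$ are disjoint, $\sigma$, and hence $\siginv$, moves every element of $\phi$, so each $a \in \phi$ must lie in the support of some $\mu_i$. This already gives $\sumSn{\siginv} \ge n$, exactly as in Theorem~\ref{3_machine_optimal}.

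The main work is to exhibit, for each $i \in \{1,\dots,r\}$, an element $b_i$ in the support $\gamma_i \subseteq \phi$ of $\tau_i$ with $N(b_i) \ge 2$; since $\gamma_1,\dots,\gamma_r$ are pairwise disjoint, the $b_i$ are distinct and we then get $\sum_{a\in\phi} N(a) \ge n + r$. Reading the factorization as a timeline with the rightmost cycle applied first, let $T = \mu_t$ be the rightmost (chronologically first) cycle whose support meets $\gamma_i$; it exists by the baseline step. I claim that just before $T$ fires, every mind belonging to a body in $\gamma_i$ still occupies a body in $\gamma_i$: this holds immediately after $\sigma$ because $\tau_i$ permutes $\gamma_i$ within itself, and none of the cycles $\mu_{t+1},\dots,\mu_s$ applied earlier touches $\gamma_i$, so none of them moves a $\gamma_i$-body or transfers a $\gamma_i$-mind out of a $\gamma_i$-body. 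Now write $T = (c_1 \; \cdots \; c_p)$. Its support contains at least one element of $\gamma_i$ and at least one element outside $\gamma_i$ (an outsider), so cyclically there is an index $a$ with $c_a \in \gamma_i$ and $c_{a-1} \notin \gamma_i$. Applying $T$ sends the mind currently in body $c_{a-1}$ into body $c_a$; since $c_{a-1} \notin \gamma_i$, that mind is not a $\gamma_i$-mind, so afterward body $c_a \in \gamma_i$ holds a mind other than its own. Hence some later-applied cycle $\mu_j$ with $j < t$ must move $c_a$, giving $N(c_a) \ge 2$; set $b_i = c_a$.

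The only real obstacle relative to Theorem~\ref{3_machine_optimal} is that $T$ is now a cycle of unknown length, so the explicit three-case check there must be replaced by the cyclic-adjacency observation (a cycle meeting both $\gamma_i$ and its complement contains a $\gamma_i$-element immediately preceded by a non-$\gamma_i$-element), together with a careful justification of the invariant that $\gamma_i$-minds stay inside $\gamma_i$-bodies until the first cycle touching $\gamma_i$ is applied. Everything else is bookkeeping identical in spirit to the $3$-machine case, and combining the two displays yields $\sumSn{\siginv} \ge n + r$.
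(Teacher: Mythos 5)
Your proof is correct and follows essentially the same strategy as the paper's: establish the baseline count of $n$ from the fact that $\sigma$ moves every element of $\phi$, then for each $\tau_i$ locate the chronologically first (rightmost) factor meeting its support $\gamma_i$ and argue that some element of $\gamma_i$ receives a mind from outside $\gamma_i$, hence must appear in a second factor. The only difference is that you treat factors of arbitrary length via the cyclic-adjacency observation and spell out the invariant that $\gamma_i$-minds stay in $\gamma_i$-bodies until that first factor fires, whereas the paper's two-case analysis is written specifically for $3$-cycles with outsiders placed rightmost; your version is marginally more general but the underlying idea is identical.
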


\begin{proof}
    This is a generalization of the argument made in Theorem \ref{3_machine_optimal}, but with any number of outsiders. 
    For an arbitrary cycle $\tau_i \in \sigma$, define $T \tand \gamma$ as in Theorem \ref{3_machine_optimal}. There are two general cases: there is an element of $\gamma$ in the far left position of $T$, or not. Assume that all 3-cycles in $\siginv$ are written with any outsiders as far right as possible.

    If there is an element of $\gamma$ in the far left position of $T$, then $T$ looks like $(a_{i,1} \; b \; x_{j,1})$ for $a_{i,1} \in \gamma$, $b$ arbitrary, and  $x_{j,1} \in \{x_1, \cdots, x_d\}$. By construction, $x_{j,1}$ does not contain a mind from $\gamma$. So, $a_{i,1}$ gets a mind from outside of $\gamma$, which cannot be its own mind. This means $a_{i,1}$ must be used at least twice in $\siginv$, and since this was true for all cycles, $C(\siginv) \geq n+r$. 

    In the case that there is not an element of $\gamma$ in the far left position of $T$, then $T$ looks like $(c \; a_{i,2} \; x_{j,2})$ for $c \notin \gamma \cup 
    \{x_1, \cdots, x_d\}$, $a_{i,2} \in \gamma$, and $x_{j,2} \in \{x_1, \cdots, x_r\}$. By construction, $c$ cannot contain a mind from $\gamma$. So, $a_{i,2}$ gets this mind from outside of $\gamma$, again giving $C(\siginv) \geq n+r$. 
\end{proof}

\begin{corollary}\label{3_optimal_people}
    Let $\sigma \in S_n$ be written as a product of $r$ disjoint cycles as
    $\sigma = \tau_1 \cdots \tau_r$, where $\tau_i$ is a $k_i$-cycle with $k_i \geq 2$ and $k_1 + \cdots + k_r = n$. Then, $\siginv \in S_{n+d} \setminus S_n$ will take at least $\frac{(n+r)}{2}$ steps for all $d \geq 2$. 
\end{corollary}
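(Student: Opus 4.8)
The plan is to reduce the statement to the counting bound already contained in Lemma~\ref{3_C_count}, together with one elementary observation about how many elements of $\phi$ a single valid step can move. First I would dispose of a trivial case: any solution that uses only the $3$-machine expresses $\siginv$ as a product of $3$-cycles, and a product of $3$-cycles is an even permutation; so if $\sigma$ is odd then $\siginv$ is odd, no solution exists, and the asserted lower bound holds vacuously. Hence I may assume $\sigma \in A_n$, and fix an arbitrary solution $\siginv = \mu_1 \cdots \mu_s$ in which each $\mu_i$ is a $3$-cycle lying in $S_{n+d}\setminus S_n$. As in the proofs of Lemmas~\ref{3_machine_count} and~\ref{3_C_count}, I write $C(\siginv)$ for the number of occurrences of elements of $\phi$ tallied over this factorization $\mu_1 \cdots \mu_s$.

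Next I would record the key observation. Because $\mu_i \in S_{n+d}\setminus S_n$, the cycle $\mu_i$ moves at least one of the $d$ outsiders $x_1,\dots,x_d$; since $\mu_i$ is a $3$-cycle, it therefore moves \emph{at most two} elements of $\phi$. Summing over $i = 1, \dots, s$ gives $C(\siginv) \le 2s$.

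Finally I would invoke Lemma~\ref{3_C_count}, which is stated for all $d \ge 1$ and so applies here: it yields $C(\siginv) \ge n + r$. Combining the two inequalities gives $2s \ge n + r$, i.e. $s \ge \frac{n+r}{2}$, which is the claimed bound. (One might additionally remark that, together with Theorem~\ref{3_machine_optimal}, this shows the optimum $\frac{n+r}{2}$ is unchanged for every $d \ge 1$, so extra outsiders never help; but that is a comment, not part of the proof.)

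I do not expect a serious obstacle here: the heavy lifting has already been done in Theorem~\ref{3_machine_optimal} and its generalization Lemma~\ref{3_C_count}. The only point requiring care is bookkeeping with the symbol $C(\,\cdot\,)$: Definition~\ref{sumSn_def} phrases it for a single outsider and for the disjoint-cycle decomposition of a permutation, whereas here it is applied to a product of (generally non-disjoint) $3$-cycles over $d$ outsiders. I would therefore make explicit, before the slot count, that this is exactly the reading already used in the proofs of Lemmas~\ref{3_machine_count} and~\ref{3_C_count}, so that the appeal to Lemma~\ref{3_C_count} is legitimate.
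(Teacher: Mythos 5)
Your proposal is correct and follows essentially the same route as the paper's own proof: combine the lower bound $C(\siginv)\ge n+r$ from Lemma~\ref{3_C_count} with the observation that each $3$-cycle in $S_{n+d}\setminus S_n$ contains an outsider and hence at most two elements of $\phi$, giving at least $\frac{n+r}{2}$ steps. Your additional remarks on the parity of $\sigma$ and on the bookkeeping behind $C(\,\cdot\,)$ are sensible clarifications but do not change the argument.
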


\begin{proof}


    Lemma \ref{3_C_count} says that $\siginv$ must satisfy $C(\siginv) \geq n+r$. But, having at least one outsider per 3-cycle (regardless of which outsider it is) means that there can be at most 2 elements of $\phi$ in each 3-cycle. This means $\siginv$ must have at least $\frac{(n+r)}{2}$ steps, as desired. 
\end{proof}

\section{An Infinitely Large Machine}

Here we present an infinite analogy of the machine. Consider a machine that takes a set of people enumerated by the natural numbers and that permutes their minds as follows. 
\begin{align*}
        1 &\to 2 \\
        2 &\to 3 \\
           \vdots 
\end{align*}
 
This can be denoted as a function $f: \NN \to \NN$ defined as $f(n) = n+1$. 

\begin{align*}
    f = 1\to2\to3\cdots\\
\end{align*}

We extend the rules to this infinite case as follows. 
\begin{enumerate}
    \item Each seat must be occupied in order for the machine to work. 
    \item Once a collection of people have used the machine, that same collection cannot use the machine again. 
\end{enumerate}
This case differs since we are not working with bijective functions so there may be bodies that remain with no machine. Swaps that do this will be called \textit{forgetful}. In the example above, no one will occupy body $1$. If the machine is only able to do forgetful swap, we would have no hope of fixing everyone as in every swap as someone will always remain mindless. Therefore, we give the machine another capability, to change direction in which minds are swapped:

\begin{align*}
     2 &\to 1 \\
        3 &\to 2 \\
           \vdots 
\end{align*}

This can be denoted as a function $g: \NN \to \NN$ defined as $g(n) = n-1$. Written in one line, 

\begin{align*}
    g = \cdots3\to2\to1
\end{align*}
it is more clear to see why we say the machine can "change direction". We will only use this kind of swapping when the first person (in this case body 1) has no mind occupying it (else body 1 would end up with two minds occupying it). We will call this type of swapping \textit{retentive}, as in no bodies remain mindless. While the swapping above would solve our original problem, it is not 
allowed under our rules as it swaps the same set of people. To summarize, here is an outline of our rules. 
\begin{enumerate}
    \item Each seat must be occupied in order for the machine to work. 
    \item Mindless bodies will remain on the machine. 
    \item Once a collection of people have used the machine, that same collection cannot use the machine again. Individuals can use the machine many times. 
    \item The machine can do \textit{forgetful} swaps. 
    \item The machine can do  \textit{retentive} swaps.
\end{enumerate}

We show the following results:
\begin{lemma}
    Let $f_0: \NN \to \NN$ defined as $f_0(n) = n+1$. Then there exist $f_1$, $f_2$, and $f_3$ where $f_1$, $f_3$ are retentive and $f_2$ is forgetful, such that $dom(f_i)\neq dom(f_j)$ for all $i\neq j$, and $\finv_0 = f_3\circ f_2\circ f_1$.
\end{lemma}

\begin{proposition}
    Given a countably infinite set $X$, any permutation $\sigma\in S_{fin}(X)$ and its inverse can be written as a composition of forgetful and retentive swaps each with unique domain.
\end{proposition}

\begin{theorem}
    Let $X$ be a countably infinite set, $\sigma \in S_{fin}(X)$. Then there exist $f_1$ and $f_2$, which are forgetful and retentive respectively, such that $dom(f_1)\neq dom(f_2)$ and $\siginv = f_2 \circ f_1$.
\end{theorem}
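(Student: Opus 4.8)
The plan is to reduce the general statement to the previous Lemma about $f_0(n) = n+1$ by a conjugation/bootstrapping argument, and then collapse the three-step decomposition there into two steps. First I would recall the setup: $X$ is countably infinite, so fix a bijection $X \cong \NN$ and assume $X = \NN$. Since $\sigma \in S_{fin}(X)$, there is a finite set $F \subseteq \NN$ with $\sigma$ supported on $F$; choose $N$ with $F \subseteq \{1, \dots, N\}$. The idea is to use the ``shift'' function $f_0(n) = n+1$ (or an appropriate restriction/variant of it) as a universal tool: shifting the finitely supported permutation out to a region of $\NN$ and back lets us realize $\siginv$ with control over domains.

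The key steps, in order, are as follows. (1) Observe that a finitely supported permutation $\sigma$ can be written as a product of transpositions, and by the earlier Keeler-style results every finitely supported permutation's inverse is expressible via distinct moves; but more directly, I would use that $\sigma$ is conjugate inside $S_{fin}(\NN)$ to a standard permutation, so it suffices to handle a convenient normal form. (2) Build a forgetful swap $f_1$ whose domain is some set $D_1$ and which ``pushes'' the relevant finite block into a shifted copy while simultaneously carrying out part of the needed rearrangement; then build a retentive swap $f_2$ with domain $D_2 \neq D_1$ that pulls the block back, completing the inverse. The freedom to pick $f_1$ forgetful (non-surjective) is what makes a single forgetful step powerful: it can absorb an entire finite permutation's worth of ``information'' by mapping into a sparse image, leaving the retentive step $f_2$ to re-fill the vacated bodies in the correct configuration. (3) Verify $f_2 \circ f_1 = \siginv$ by checking it agrees with $\siginv$ on $F$ and is the identity off $F$, and verify $\dom(f_1) \neq \dom(f_2)$ by explicitly exhibiting an element in one domain but not the other (e.g., arrange that $f_1$ omits a body that $f_2$ must occupy).

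The main obstacle I expect is the \emph{bookkeeping to guarantee $\dom(f_1) \neq \dom(f_2)$ while still getting exactly $\siginv$}, rather than any deep structural difficulty. Concretely, a forgetful swap followed by a retentive swap naturally composes to \emph{some} permutation, but forcing that composition to equal the prescribed $\siginv$ — and not merely $\siginv$ up to a shift — requires choosing the domains and the action of each swap in a coordinated way, and the ``same collection of people cannot be used twice'' rule must be respected between the two steps (this is automatic once the domains differ, which is why establishing $\dom(f_1) \neq \dom(f_2)$ is the crux). I would handle this by defining $f_1$ on a domain containing $\{1, \dots, N+1\}$ that acts as ``$\sigma$ on $\{1,\dots,N\}$ then forget/shift the remainder'' and $f_2$ on a domain missing the body that $f_1$ forgot, acting as the identity-plus-correction that restores surjectivity; the parity/counting constraints from the finite case (Theorem~\ref{3_machine_optimal} and its lemmas) do not obstruct anything here since in the infinite setting we are no longer minimizing steps. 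Finally I would note the contrast emphasized in the introduction: no matter how many machine uses produced $\sigma$, two steps suffice, because a single forgetful swap into a countably infinite codomain has unbounded ``capacity.''
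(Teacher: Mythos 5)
Your proposal captures the right intuition---a single forgetful swap has infinite ``capacity,'' so it can absorb all of $\sigma$ at once and leave one retentive swap to clean up---but as written it has a genuine gap: the swaps $f_1$ and $f_2$ are never actually constructed, and the reduction you offer in their place does not go through. The shift $f_0(n)=n+1$ is not a finitary permutation (it moves every element), so $\sigma\in S_{fin}(X)$ is not conjugate to it or to any restriction of it, and the earlier lemma writing $f_0^{-1}$ as three steps is solving a different problem; nothing in your sketch explains how conjugation would transport that decomposition to an arbitrary finitary $\sigma$, nor how three steps become two. The paper instead proceeds directly: writing $\sigma=\tau_1\cdots\tau_m$ as disjoint cycles $\tau_i=(\cyclist{a_i}{k_i})$ and letting $b_1,b_2,\dots$ enumerate the elements of $X$ not moved by $\sigma$, it exhibits
\[
\siginv=(\cdots b_3\ b_2\ b_1\ a_{m,k_m}\cdots a_{2,k_2}\ x\ a_{1,k_1})\,(\cyclinv{a_1}{k_1}\ x\ \cyclinv{a_2}{k_2}\cdots\cyclinv{a_m}{k_m}\ b_1\ b_2\ b_3\cdots),
\]
where the right factor is the forgetful $f_1$ and the left factor is the retentive $f_2$. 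The one idea your plan is missing is exactly this concatenation trick: all $m$ reversed cycles are threaded into a \emph{single} forgetful swap, linked through the outsider $x$ and then trailing off into the unmoved elements $b_j$, so that $f_2$ need only undo the resulting overshoot. In particular $f_1$ and $f_2$ act inversely to one another on the $b_j$, which is what makes the composition the identity off the support of $\sigma$---a cancellation your step (3) asserts must be checked but never arranges, even though each individual swap moves infinitely many elements outside the support.

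You are also right that $dom(f_1)\neq dom(f_2)$ is not a deep issue, but it is not the crux you identify as the main obstacle; it falls out of the explicit formulas for free. The real content of the theorem is the construction above and the element-tracing verification that the displayed product equals $\siginv$, and a proof that stops at ``build $f_1$ to push the block out and $f_2$ to pull it back'' has not yet done that work.
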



\subsection{Finite Disjoint Uses} \label{finite disjoint uses}
In this case, we will suppose a countable set of people enumerated by $\NN$ have used the machine. The machine does the swap $f: \NN \to \NN$ as $f(n) = n+1$. As in the finite case, we will introduce an outsider who has not used the machine. Let $z$ denote this outsider. We can undo $f$ in the following steps. 
\begin{multicols}{3}
\begin{itemize}
    \item[Step 1:] 
        \begin{align*}
            2 &\to 1 \\
            z &\to 2 \\
            3 &\to z \\
            4 &\to 3 \\
            5 &\to 4 \\
            \vdots 
        \end{align*}
        
    \item [Step 2:]
        \begin{align*}
            z &\to 2 \\
            2 &\to 3 \\
            3 &\to 4 \\
            4 &\to 5 \\
            5 &\to 6 \\
            \vdots  
        \end{align*}

    \item [Step 3:]
        \begin{align*}
            3 &\to z \\
            4 &\to 3 \\
            5 &\to 4 \\
            6 &\to 5 \\
            7 &\to 6 \\
            \vdots  
        \end{align*}
    
\end{itemize}
\end{multicols}


The original swap and map in step 1 can be defined as
    
\begin{align}f(x) = x + 1 \qquad
f_1(x) = \begin{cases}
    2 & \text{ if $x = z$}\\
    z & \text{ if $x = 3$}\\
    x-1 & \text{ else}.
\end{cases}
\end{align}
We can analogously define step 2 and step 3. The piecewise functions can be written more concisely using an extension of cycle notation. For example, 

\begin{equation}
    f = (1\; 2\; 3\; 4 \cdots) \qquad f_1 = (\cdots4\; 3\; x\; 2\; 1).
\end{equation}
In the cycle notation, each person has their mind sent to the person to their right. Notice that forgetful swaps trail off on the right while retentive ones trail off on the left. This notation makes the proof of the following lemma much easier to write:

\begin{lemma}
    Let $f_0: \NN \to \NN$ defined as $f_0(n) = n+1$. Then there exist $f_1$, $f_2$, and $f_3$ where $f_1$, $f_3$ are retentive and $f_2$ is forgetful, such that $dom(f_i)\neq dom(f_j)$ for all $i\neq j$, and $\finv_0 = f_3\circ f_2\circ f_1$.
\end{lemma}

\begin{proof}
    Define 
    \begin{align*}
        f_1 &= (\cdots 4\; 3\; x\; 2\; 1)\\
        f_2 &= (x\; 2\; 3\; 4\cdots)\\
        f_3 &= (\cdots5\; 4\; 3\; x).
    \end{align*}

    Then $dom(f_0) = \NN$, $dom(f_1) = \NN \cup \{z\}$, $dom(f_2) = dom(f_1)\setminus\{1\}$, and $dom(f_3) = dom(f_1)\setminus\{1,2\}$. By tracing elements, we see that $\finv_0 = f_3\circ f_2\circ f_1$.
\end{proof}

We can generalize the swap as follows. Let $X$ denote a countably infinite set of people, defined as $X =\{a_1, a_2, \cdots \}$, that uses the machine. The machine does the swap defined by the function $f: X\to X$, $f(a_n) = a_{n+1}$. Define 

    $$\fhat = (\cdots a_5\; a_4\; a_3\; x)(x\; a_2\; a_3\; a_4\cdots)(\cdots a_4\; a_3\; x\; a_2\; a_1)$$
 and from the lemma, it is immediate to show $f^{-1} = \fhat$.\\

\begin{corollary}
    Let $X_1,\cdots,X_n$ be countably infinite, pairwise disjoint subsets of $X$ where $X_i = \{x_{i,j}\}_{j \in \NN}$.  Define $f: X \to X$ by $f(x) = g_1 \circ \cdots \circ g_n$ where 
    $$g_i(x) = (x_{i,1}\; x_{i,2}\; x_{i,3}\cdots) = 
    \begin{cases}
        x & \text{ if $x \notin X_i$}\\
        x_{i,j+1} & \text{ if $x\in X_i$}
    \end{cases}$$ 
    Then $f^{-1} = \ghat_n \circ \cdots \circ \ghat_1$.
\end{corollary}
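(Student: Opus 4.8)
The plan is to reduce everything to the single‑shift case that has already been settled, namely the identity $f^{-1}=\fhat$ for the shift $f(a_n)=a_{n+1}$ on one countably infinite set, and then glue the $n$ pieces together using the pairwise disjointness of the $X_i$.

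First I would record the elementary structural fact: since the $X_i$ are pairwise disjoint, the maps $g_1,\dots,g_n$ have pairwise disjoint supports (each $g_i$ moves only elements of $X_i$), hence they pairwise commute, and therefore
\[ f^{-1} = g_n^{-1}\circ\cdots\circ g_1^{-1}, \]
with the order of the factors immaterial. Next, for each fixed $i$ the restriction of $g_i$ to $X_i=\{x_{i,j}\}_{j\in\NN}$ is exactly the shift $x_{i,j}\mapsto x_{i,j+1}$ on a countably infinite set, so the single‑machine identity $f^{-1}=\fhat$ (and the Lemma on which it rests) applies verbatim, with an outsider $x_i$ assigned to the $i$‑th machine; it yields $g_i^{-1}=\ghat_i$, where
\[ \ghat_i = (\cdots x_{i,4}\; x_{i,3}\; x_i)(x_i\; x_{i,2}\; x_{i,3}\cdots)(\cdots x_{i,3}\; x_i\; x_{i,2}\; x_{i,1}) \]
is the prescribed composition of one forgetful and two retentive swaps. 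Substituting $g_i^{-1}=\ghat_i$ into the displayed formula gives $f^{-1}=\ghat_n\circ\cdots\circ\ghat_1$, which is the assertion.

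It remains to verify the admissibility conditions. I would take the outsiders $x_1,\dots,x_n$ to be distinct and disjoint from $X$; then each $\ghat_i$ is supported inside $X_i\cup\{x_i\}$, so the $\ghat_i$ again have pairwise disjoint supports and commute, which is why the composition is unambiguous no matter how the blocks are interleaved. For the distinctness of domains: within a single block $\ghat_i$ the three domains are $X_i\cup\{x_i\}$, $(X_i\cup\{x_i\})\setminus\{x_{i,1}\}$ and $(X_i\cup\{x_i\})\setminus\{x_{i,1},x_{i,2}\}$, which are pairwise distinct by the Lemma; and every domain occurring in $\ghat_i$ contains $x_i$, whereas no domain occurring in $\ghat_{i'}$ does for $i'\neq i$, so domains from different blocks are automatically distinct. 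Hence all $3n$ swaps have pairwise distinct domains, and the construction is legal.

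The one place that calls for care is conceptual rather than computational: forgetful and retentive swaps are not bijections of $\NN$ alone, only of $\NN$ together with the relevant outsider, so "$f^{-1}$" and "the $g_i$ commute" need to be interpreted correctly. I would handle this by regarding every map as an honest bijection of the enlarged set $X\cup\{x_1,\dots,x_n\}$, extending $f$ and each $g_i$ by the identity on the outsiders; at that point the factorization $f=g_1\circ\cdots\circ g_n$, the disjoint‑support commutation, and the single‑machine identities $g_i^{-1}=\ghat_i$ are all genuine equalities of bijections, and the chained cancellation $\ghat_n\circ\cdots\circ\ghat_1\circ g_1\circ\cdots\circ g_n=\id$ follows by tracing elements exactly as in the proof of the Lemma. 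This is the step I expect to demand the most attention, but it is bookkeeping, not a real obstacle.
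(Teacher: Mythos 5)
Your proposal is correct and matches the argument the paper intends: the corollary is stated there without proof as an immediate consequence of the preceding lemma, and your write-up --- disjoint supports of the $g_i$ give commutativity, then the single-shift identity $g_i^{-1}=\ghat_i$ is applied blockwise --- is exactly that omitted ``immediate'' argument. Your additional bookkeeping about choosing distinct outsiders and checking that all $3n$ domains are pairwise distinct is care the paper glosses over, but it does not change the route.
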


\subsection{Finitary Permutations}\label{finitary permutations}
Let $X$ be a countably infinite set, and $S(X)$ the permutation group on $X$. Let $S_{fin}(X)$ be the subgroup of $S(X)$ consisting of permutations of $X$ which fix all but finitely many elements of $X$. We call $\sigma\in S(X)$ a \textit{finitary} permutation if $\sigma \in S_{fin}(X)$. As it turns out, we can obtain any finitary permutation from swaps using the machine under our restrictions. We start with cycles.
\begin{lemma}
    Let $X$ be a countably infinite set, $a_1,\cdots,a_n\in X$. Then there exists $f_1$ and $f_2$, forgetful and retentive respectfully, such that $dom(f_1)\neq dom(f_2)$ and $(a_1\; \cdots\; a_n) = f_2\circ f_1$.
\end{lemma}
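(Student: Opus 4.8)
The plan is to realise the finite cycle by using the infinitely many ``spare'' elements of $X$ as a conveyor belt. Since $X$ is countably infinite and $\{a_1,\dots,a_n\}$ is finite, I would enumerate $X\setminus\{a_1,\dots,a_n\}=\{b_1,b_2,b_3,\dots\}$. The idea is that a single forgetful swap should push every mind one notch to the right along the string $a_1\,a_2\,\cdots\,a_n\,b_1\,b_2\,\cdots$, thereby emptying body $a_1$ and shoving the ``overflow'' mind (the one that must eventually return to $a_1$) out into the $b_j$'s; then a single retentive swap walks that overflow mind back down the $b_j$'s into $a_1$, returns every $b_j$ to itself, and leaves $a_2,\dots,a_n$ untouched. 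This mimics, but collapses to two steps, the three-swap ``$\fhat$'' device used earlier for the infinite shift, the point being that the cycle is finite so there is room to route its wrap-around through the infinitely many unused elements.

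Concretely I would set
\[ f_1=(a_1\ a_2\ \cdots\ a_n\ b_1\ b_2\ b_3\ \cdots),\qquad f_2=(\cdots\ b_3\ b_2\ b_1\ a_1), \]
so that $dom(f_1)=X$ while $dom(f_2)=\{a_1\}\cup\{b_1,b_2,\dots\}=X\setminus\{a_2,\dots,a_n\}$; these domains differ as soon as $n\ge2$ (a $1$-cycle is the identity and can be handled separately or simply excluded). The swap $f_1$ is forgetful because body $a_1$ receives no mind, and it trails off on the right as a forgetful swap should; $f_2$ is retentive because each of its seats $a_1,b_1,b_2,\dots$ receives a mind, and it trails off on the left. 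Moreover $f_2$'s terminus $a_1$ is exactly the body emptied by $f_1$, so the retentive swap is being applied legally (its first person has no mind occupying it), and since $dom(f_1)\ne dom(f_2)$ the two uses do not repeat a collection.

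The remaining step is to verify $f_2\circ f_1=(a_1\ \cdots\ a_n)$, which I would do by tracing a mind through the two uses: for $1\le i<n$, $f_1$ sends the mind in $a_i$ to $a_{i+1}$ and $f_2$ fixes $a_{i+1}$ (it is not a seat of $f_2$), so $a_i\mapsto a_{i+1}$; the mind of $a_n$ is sent by $f_1$ to $b_1$ and by $f_2$ back to $a_1$, so $a_n\mapsto a_1$; and each $b_j$ is pushed by $f_1$ to $b_{j+1}$ and returned by $f_2$ to $b_j$, so $b_j\mapsto b_j$. Because $X=\{a_1,\dots,a_n\}\cup\{b_1,b_2,\dots\}$, this exhausts $X$ and shows the composite is precisely $(a_1\ \cdots\ a_n)$.

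I do not anticipate a genuine obstacle: once the conveyor-belt construction is written down, the proof is routine tracing, and the creative content is just seeing that the cycle's wrap-around should be routed through the (infinitely many) spare elements. The points that do require care — and are worth stating explicitly — are the domain bookkeeping (that $a_2,\dots,a_n$ are \emph{not} seats of $f_2$, which is simultaneously what makes $dom(f_1)\ne dom(f_2)$ and what stops the composite from disturbing anything beyond the cycle) and the observation that $f_2$ is invoked only when its terminus $a_1$ is already empty, so that it legitimately counts as retentive. The hypothesis is used exactly where it must be: $X\setminus\{a_1,\dots,a_n\}$ is infinite, which is what allows a right-trailing (forgetful) swap and a left-trailing (retentive) swap with the required behaviour to exist at all.
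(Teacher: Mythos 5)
Your construction is essentially the paper's own proof: the paper sets $f_1=(a_1\; a_2\cdots a_n\; a_{n+1}\cdots)$ and $f_2=(\cdots a_{n+2}\; a_{n+1}\; a_n)$ with $a_{n+1},a_{n+2},\dots$ enumerating the spare elements of $X$, which is exactly your conveyor-belt idea with $b_j=a_{n+j}$. If anything, your version is the more careful one: the terminus of the retentive swap must be the emptied body $a_1$ (as you have it), whereas the paper's displayed formula ends the retentive cycle at $a_n$, which would leave $a_1$ mindless and fix $a_n$; your explicit tracing and domain bookkeeping supply exactly the verification the paper omits.
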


\begin{proof}
    Let $f = (a_1\; a_2 \cdots a_n)$, then 
    $$f = (\cdots a_{n+2}\; a_{n+1}\; a_n)(a_1\; a_2\cdots a_n\; a_{n+1}\cdots)$$ 
\end{proof}

\begin{example}
    
Let $f= (a_1\; a_2)$ and $z \notin X$. Then the transposition is solved by: 

\begin{multicols}{2}
\begin{itemize}
    \item[Step 1:] 
        \begin{align*}
            a_1 &\to a_2 \\
            a_2 &\to z \\
            z &\to a_3 \\
            a_3 &\to a_4 \\
            a_4 &\to a_5 \\
            \vdots 
        \end{align*}
        
    \item [Step 2:]
        \begin{align*}
            z &\to a_1 \\
            a_3 &\to z \\
            a_4 &\to a_3 \\
            a_5 &\to a_4 \\
            a_6 &\to a_5 \\
            \vdots  
        \end{align*}
\end{itemize}
\end{multicols}

In cycle notation,

$$(a_1\; a_2)^{-1} = (a_1\; a_2\; z\; a_3\; a_4\cdots)(\cdots a_5\; a_4\; a_3\ z\ a_1)$$
\end{example}

\begin{lemma}
    Let $X$ be a countably infinite set, $a_1,\cdots,a_n\in X$, and $z\notin X$. Then there exists $f_1$ and $f_2$, forgetful and retentive respectfully, such that $z\in dom(f_1), dom(f_2)$, $dom(f_1)\neq dom(f_2)$, and $(a_1\; \cdots\; a_n)^{-1} = f_2\circ f_1$. 
\end{lemma}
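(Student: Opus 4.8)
The plan is to write down explicit forgetful and retentive swaps realizing $\sigma^{-1}$, where $\sigma = (a_1\;\cdots\;a_n)$, in the same spirit as the transposition example computed just above (which is the case $n=2$). First I would fix an enumeration $a_{n+1}, a_{n+2}, \ldots$ of $X\setminus\{a_1,\ldots,a_n\}$ (legitimate since $X$ is countably infinite), and then set
\[
f_1 = (a_1\; a_n\; a_{n-1}\; \cdots\; a_2\; z\; a_{n+1}\; a_{n+2}\; \cdots),
\qquad
f_2 = (\cdots\; a_{n+2}\; a_{n+1}\; z\; a_1).
\]
The essential design choice is that the block $a_1,\ldots,a_n$ is listed in \emph{reverse} order inside $f_1$: this is what makes the composition produce $\sigma^{-1}$ rather than $\sigma$. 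The swap $f_1$ trails off to the right, so it is forgetful, and the seat it leaves mindless is precisely $a_1$; the swap $f_2$ trails off to the left, so it is retentive, and its rightmost seat — the one a retentive swap requires to be empty beforehand — is also $a_1$. Hence applying $f_2$ after $f_1$ is a legal sequence of machine uses.

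Next I would record the domain bookkeeping: $dom(f_1) = X\cup\{z\}$, whereas $dom(f_2) = \{a_1, z\}\cup\{a_{n+1}, a_{n+2}, \ldots\}$. Thus $z\in dom(f_1)\cap dom(f_2)$, while $a_2\in dom(f_1)\setminus dom(f_2)$, so $dom(f_1)\neq dom(f_2)$ (we take $n\ge 2$). Finally I would verify the identity $\sigma^{-1}=f_2\circ f_1$ by tracing each element through $f_1$ then $f_2$: $a_1\mapsto a_n\mapsto a_n$; $a_i\mapsto a_{i-1}\mapsto a_{i-1}$ for $3\le i\le n$; $a_2\mapsto z\mapsto a_1$; $z\mapsto a_{n+1}\mapsto z$; and $a_k\mapsto a_{k+1}\mapsto a_k$ for $k\ge n+1$. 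The net map is the cycle $(a_1\; a_n\; a_{n-1}\; \cdots\; a_2)=\sigma^{-1}$, and it fixes $z$ and every element of $X\setminus\{a_1,\ldots,a_n\}$, as required.

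I do not expect a serious obstacle, since this is a lightly decorated version of the preceding example; the two things that need genuine care are (i) orienting the finite block so that the composition lands on $\sigma^{-1}$ rather than $\sigma$, and (ii) checking that the single seat $f_1$ empties — namely $a_1$ — is exactly the seat the retentive swap $f_2$ demands be empty, so that $f_2\circ f_1$ is a legal two-step process and is even well defined (the image of $f_1$ must be the set of occupied seats on which $f_2$ acts). Both checks are immediate once $f_1$ and $f_2$ are written in the extended cycle notation.
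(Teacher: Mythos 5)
Your proposal is correct and follows essentially the same route as the paper: you write down exactly the same two extended cycles (forgetful $f_1$ with the block $a_1,\ldots,a_n$ reversed and trailing off to the right, retentive $f_2$ trailing off to the left), and the paper's proof is precisely the one-line identity $(a_1\,\cdots\,a_n)^{-1} = (\cdots a_{n+2}\ a_{n+1}\ z\ a_1)(a_1\ a_n\ \cdots\ a_2\ z\ a_{n+1}\ a_{n+2}\cdots)$. Your added element-tracing and domain bookkeeping only make explicit what the paper leaves to the reader.
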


\begin{proof}
    $(a_1\; a_2\cdots a_k)^{-1} = (\cdots a_{k+2}\ a_{k+1}\ z\ a_1)(a_1\; a_k\ a_{k-1}\cdots a_3\; a_2\; z\; a_{k+1}\ a_{k+2}\cdots)$
\end{proof}

Let $\sigma \in S_{fin}(X)$ be the cycle $\sigma = (a_1\; a_2\cdots a_k)$ and define
$$\sigstar = (\cdots a_{k+2}\ a_{k+1}\ z\ a_1)(a_1\; a_k\; a_{k-1}\cdots a_3\; a_2\; z\; a_{k+1}\; a_{k+2}\cdots)$$\\
Then from the above lemma, $\sigstar = \siginv$.

\begin{proposition}
    Given a countably infinite set $X$, any permutation $\sigma\in S_{fin}(X)$ and its inverse can be written as a composition of forgetful and retentive swaps each with unique domain.
\end{proposition}

\begin{proof}
    Since $\sigma\in S_{fin}(X)$, there exist disjoint cycles $\tau_1,\cdots\tau_m \in S_{fin}(X)$, such that $\sigma = \tau_1\cdots\tau_m$. Then $\siginv = \taustar_m\cdots\taustar_1$. Then we are done since the fact that each has a unique domain follows from the cycles being disjoint.
\end{proof}

Since each $\taustar_i$ uses two swaps on the machine, the total number of swaps needed to write $\siginv$ is $2m$. We can do better. Consider $\sigma = (a_1\; a_2)(a_3\; a_4\; a_5)$. Applying our solution from before we know $(a_1\; a_2)^{-1} = (a_1\; a_2\; z\; a_3\; a_4\cdots)(\cdots a_5\; a_4\; a_3\; z\; a_1)$. Examine the left permutation: $(a_1\; a_2\; z\; a_3\; a_4\; a_5 \cdots).$ For fixing $(a_1\; a_2)$, the only part of this permutation that is important is $a_1\to a_2\to z$. So we can change it to $(a_1\; a_2\; z\; a_5\; a_4\; a_3\cdots)$. Notice this fixes $a_4$ and $a_3$ along with our intention to fix $a_2$. We can generalize this idea to obtain a much more efficient solution.

\begin{example}
$\sigma = (a_1\; a_2)(a_3\; a_4\; a_5)$

Let $z\notin X$. Then the problem is solved by:

\begin{multicols}{2}
\begin{itemize}

    \item[Step 1:] 
        \begin{align*}
            a_1 &\to a_2 \\
            a_2 &\to z \\
            z &\to a_5 \\
            a_5 &\to a_4 \\
            a_4 &\to a_3 \\
            a_3 &\to a_6 \\
            a_6 &\to a_7 \\
            \vdots
        \end{align*}
        
    \item [Step 2:]
        \begin{align*}
            z &\to a_1 \\
            a_5 &\to z \\
            a_6 &\to a_5 \\
            a_7 &\to a_6 \\
            \vdots  
        \end{align*}
    
\end{itemize}
\end{multicols}
In cycle notation:
$((a_1\; a_2)(a_3\; a_4\; a_5))^{-1} = (\cdots a_7\; a_6\; a_5\; z\; a_1)(a_1\; a_2\; z\; a_5\; a_4\; a_3\; a_6\; a_7\cdots)$.\\
\end{example}

Generalizing this solution allows us to fix any finitary permutation in just 2 steps.
We create some notation for an arbitrary cycle: 

Let $\cyclist{a}{k} := a_1\ \cdots a_k$ and $\cyclinv{a}{k} := a_k\ \cdots a_1$ we write 

\begin{align*}
    &(a_1\cdots a_k) = (\cyclist{a}{k})
    &(a_k \cdots a_1) = (\cyclinv{a}{k})
\end{align*}
Note that $(\cyclist{a}{k})^{-1} = (\cyclinv{a}{k})$.

\begin{theorem}
    Let $X$ be a countable infinite set, $\sigma \in S_{fin}(X)$. Then there exist $f_1$ and $f_2$, which are forgetful and retentive respectively, such that $dom(f_1)\neq dom(f_2)$ and $\siginv = f_2 \circ f_1$.
\end{theorem}

\begin{proof}
    Let $\sigma = \tau_1\cdots\tau_m$ be the cycle decomposition. Let $\tau_i = (\cyclist{a_i}{k_i})$. 
    Then $$\sigma = (\cyclist{a_1}{k_1})(\cyclist{a_2}{k_2})\cdots(\cyclist{a_m}{k_m})$$. And thus by tracing elements one can see:
    $$\siginv = (b_3\ b_2\ b_1\ a_{m,k_m}\cdots a_{2,k_2}\ x\ a_{1,k_1})(\cyclinv{a_1}{k_1}\ x\ \cyclinv{a_2}{k_2}\cyclinv{a_3}{k_3}\cdots\cyclinv{a_m}{k_m}\ b_1\ b_2\ b_3\cdots)$$
where the $b_j \in X$ not permuted by any $\tau_i$ for any $i$.
\end{proof}

\section{Future Directions}
In our finite case, we only proved optimally for when $n =3$ so a natural extension is to prove optimally for any $n \in \NN$. A noteworthy observation in the $3$ cycle case, each product in the solution is a distinct $3$ cycle. As an example, consider the solution for when the machine permutes $3$ people. 
\begin{equation}
(1\;2\;3) = (3\;1\;x_1)(2\;3\;x_1)(1\;2\;x_1)
\end{equation}
Since each $3$ cycle is distinct, then they correspond to a unique triangle on the graph with vertices $\{1, 2, 3, x_1\}$. If we count all possible distinct triangles that have at least $x_1$ in the graph, this places an lower bound for the the total number of moves required. 
It would be interesting to see if these ideas carry over to more general cases. \\
In the infinite case we lost a group structure when we considered a machine that does $n+1$ and $n-1$ on $\NN$. It would be interesting to explore the scenario with a group structure by considering a machine that does $n+1$ and $n-1$ on $\ZZ$. One could also look at a machine that preforms a different permutation on a countable set. For a countably infinite set $X$, one could consider a finite machine that only cycles $m$-people.

\newpage

\nocite{*}
\bibliographystyle{plain} 
\bibliography{keeler} 

\begin{thebibliography}{1}

\bibitem{inf_perm_text}
Meenaxi Bhattacharjee, Dugald Macpherson, R{\"o}gnvaldur~G. M{\"o}ller, and
  Peter~M. Neumann.
\newblock {\em Notes on Infinite Permutation Groups}.
\newblock Number no. 1689 in Lecture Notes in Mathematics. Springer, 1998.

\bibitem{dummit_foote_text}
David~S. Dummit and Richard~M. Foote.
\newblock {\em Abstract Algebra, 3rd Edition}.
\newblock Wiley, 7 2003.

\bibitem{elder_thesis}
Jennifer Elder.
\newblock Generalizing the futurama theorem.
\newblock Master's thesis, California State University, Fresno, 2016.

\bibitem{elder_paper}
Jennifer Elder and Oscar Vega.
\newblock Generalizing the futurama theorem, 2016.

\bibitem{optimal_paper}
Ron Evans, Lihua Huang, and Tuan Nguyen.
\newblock Keeler's theorem and products of distinct transpositions, 2012.

\bibitem{gallian_text}
Joseph~A. Gallian.
\newblock {\em Contemporary Abstract Algebra}.
\newblock Textbooks in Mathematics. CRC Press, 2021.

\bibitem{wiki_futurama}
Futurama Wiki.
\newblock The prisoner of benda.
\newblock \url{https://futurama.fandom.com/wiki/The_Prisoner_of_Benda}".
\newblock Accessed: 2023-07-17.

\end{thebibliography}

\newpage 


\newpage

\end{document}